\newtheorem{thm}{Theorem}[section]
\newtheorem{lem}[thm]{Lemma}
\newtheorem{prop}[thm]{Proposition}
\newtheorem{defn}[thm]{Definition}
\newtheorem{ex}[thm]{Example}
\numberwithin{equation}{section}
\begin{document}

\title{\bf Riemannian geometry of noncommutative super surfaces}

\author{Yong Wang$^*$, Tong Wu}

\thanks{{\scriptsize
\hskip -0.4 true cm \textit{2010 Mathematics Subject Classification:}
53C40; 53C42.
\newline \textit{Key words and phrases:} Noncommutative super surfaces; super connections; Moyal product; Bianchi identities
\newline \textit{Corresponding author:} Yong Wang}}

\maketitle

\begin{abstract}
 In this paper, a Riemannian geometry of noncommutative super surfaces is developed which generalizes \cite{CTZZ} to the super case.
  The notions of metric and connections on such noncommutative super surfaces are introduced and it
is shown that the connections are metric-compatible and have zero torsion when the super metric is symmetric, giving rise to the corresponding super Riemann curvature. The latter also satisfies the noncommutative super analogue of the first and second Bianchi identities. We also give some examples and study them in details.

\end{abstract}

\vskip 0.2 true cm

%------------------------------------------------------------------------------------%

\pagestyle{myheadings}
\markboth{\rightline {\scriptsize Wang}}
         {\leftline{\scriptsize Riemannian geometry of noncommutative super surfaces}}

\bigskip
\bigskip

%------------------------------------------------------------------------------------%
%------------------------------------------------------------------------------------%

\section{ Introduction}
 \indent  \quad
 \label{Sec:1}
  It is well known that $2$-dimensional surfaces embedded in the Euclidean $3$-space provide the simplest
yet nontrivial examples of Riemannian geometry. The Euclidean metric of the $3$-space induces a
natural metric for a surface through the embedding; the Levi-Civita connection and the curvature
of the tangent bundle of the surface can thus be described explicitly. In \cite{CTZZ}, Chaichian-Tureanu-Zhang-Zhang developed noncommutative deformations of Riemannian geometry in the light of
Whitney's theorem. They deformed the algebra of functions on a domain of the Euclidean space by introducing the Moyal algebra, which is a noncommutative deformation of the algebra of smooth functions on a region of $\mathbb{R}^2$. Then they developed a noncommutative Riemannian geometry for noncommutative analogues of $2$-dimensional surfaces embedded in $3$-space. Working over the Moyal algebra, they showed that
much of the classical differential geometry for surfaces generalized naturally to this noncommutative setting. In \cite{AB}, the authors constructed deformation of the algebra of diffeomorphisms for canonically deformed spaces with constant deformation parameter theta. The algebraic relations remained the same, whereas the comultiplication rule (Leibniz rule) was different from the undeformed one. Based on this deformed algebra a covariant tensor calculus was constructed and all the concepts like metric, covariant derivatives, curvature and torsion was defined on the deformed space as well. The construction of these geometric quantities was presented in detail. In \cite{AB1}, Aschier found that the Lie algebra of infinitesimal diffeomorphisms on noncommutative space allowed to develop differential and Riemannian noncommutative geometry. And noncommutative Einstein's gravity equations were formulated. In \cite{Go}, Goertsches developed a theory of Riemannian supermanifolds up to a definition of Riemannian symmetric superspaces. And various fundamental concepts needed for the study of these spaces both from the Riemannian and the Lie theoretical viewpoint were introduced.\\
 \indent On the other hand, it is well known that the classical differential geometry can be generalized to the super case. In \cite{BG}, Bruce and Grabowski examined the notion of a Riemannian $\mathbb{Z}_2^n$ manifold.
 They showed that the basic notions and tenets of Riemannian geometry directly generalized to the setting of $\mathbb{Z}_2^n$-geometry.
For example, the fundamental theorem holded in the higher graded setting. They pointed out the
similarities and differences with Riemannian supergeometry.\\
 \indent The motivation of this paper is to generalize \cite{CTZZ} to the super case. In Section \ref{Sec:2}, we introduce the super Moyal algebra
and the notions of metric and connections on noncommutative super surfaces and it
is shown that the connections are metric-compatible and have zero torsion when the super metric is symmetric, giving rise to the corresponding super Riemann curvature. The latter also satisfies the noncommutative super analogue of the first and second Bianchi identities.
 In Section \ref{Sec:3}, we give some examples and study them in details.

 %------------------------------------------------------------------------------------%

\vskip 1 true cm

\section{Noncommutative super surfaces}
\label{Sec:2}
Firstly we introduce some notations on Riemannian supergeometry.
\begin{defn}\label{def1} A locally $\mathbb{Z}_2$-ringed space is a pair $S:= (|S|, \mathcal{O}_S)$ where $|S|$ is a second-countable
Hausdorff space, and a $\mathcal{O}_S$ is a sheaf of $\mathbb{Z}_2$-graded $\mathbb{Z}_2$-commutative associative unital $\mathbb{R}$-algebras, such that the
stalks $\mathcal{O}_{S,p}$, $p\in |S|$ are local rings.
\end{defn}
 \indent In this context, $\mathbb{Z}_2$-commutative means that any two sections $s,t\in \mathcal{O}_S(|U|),~~|U|\subset|S|$ open,
 of homogeneous
degree $|s|\in \mathbb{Z}_2$
and $|t|\in \mathbb{Z}_2$
commute up to the sign rule
$st=(-1)^{|s||t|}ts$.
 $\mathbb{Z}_2$-ring
space $U^{m|n}:= (U,C^{\infty}_{U^m}\otimes \wedge \mathbb{R}^n)$, is called standard
superdomain where $C^{\infty}_{U^m}$ is the sheaf of smooth functions on $U$ and $\wedge\mathbb{R}^n$ is
the exterior algebra of $\mathbb{R}^n$. We can employ (natural) coordinates $x^I:=(x^a,\xi^A)$ on any $\mathbb{Z}_2$-domain, where $x^a$ form a coordinate system on $U$ and the $\xi^A$
are formal coordinates.
\begin{defn}\label{def2}
 A supermanifold of dimension $m|n$ is a super ringed space
$M=(|M|, \mathcal{O}_M )$ that is locally isomorphic to $\mathbb{R}^{m|n}$ and $|M|$ is a second countable
and Hausdorff topological space.
\end{defn}
 The tangent sheaf $\mathcal{T}M$ of a $\mathbb{Z}_2$-manifold $M$ is defined as the sheaf of derivations of sections of the structure
sheaf, i.e., $\mathcal{T}M(|U|) := {\rm Der}(\mathcal{O}_M(|U|)),$ for arbitrary open set $|U|\subset |M|.$ Naturally, this is a sheaf of locally free $\mathcal{O}_M$-modules. Global sections of the tangent sheaf are referred to as {\it vector fields}. We denote the $\mathcal{O}_M (|M|)$-module
of vector fields as ${\rm Vect}(M)$. The dual of the tangent sheaf is the {\it cotangent sheaf}, which we denote as $\mathcal{T}^*M$.
This is also a sheaf of locally free $\mathcal{O}_M$-modules. Global section of the cotangent sheaf we will refer to as {\it one-forms}
and we denote the $\mathcal{O}_M(|M|)$-module of one-forms as $\Omega^1(M)$.
\begin{defn}\label{def3}
 A Riemannian metric on a $\mathbb{Z}_2$-manifold M is a $\mathbb{Z}_2$-homogeneous, $\mathbb{Z}_2$-symmetric, non-degenerate,
$\mathcal{O}_M$-linear morphisms of sheaves $\left<-,-\right>_g:~~\mathcal{T}M\otimes \mathcal{T}M\rightarrow \mathcal{O}_M.$
A $\mathbb{Z}_2$-manifold equipped with a Riemannian metric is referred to as a Riemannian $\mathbb{Z}_2$-manifold.
\end{defn}
We will insist that the Riemannian metric is homogeneous with respect to the $\mathbb{Z}_2$-degree, and we will denote
the degree of the metric as $|g| \in \mathbb{Z}_2$.
Explicitly, a Riemannian metric has the following properties:\\
(1)$ |\left<X,Y\right>_g |= |X| + |Y |+ |g|,$\\
(2)$\left<X,Y\right>_g =(-1)^{|X||Y|}\left<Y,X\right>_g,$\\
(3) If $\left<X,Y\right>_g = 0$ for all $Y \in Vect(M),$ then $X = 0,$\\
(4) $\left<fX+Y,Z\right>_g =f\left<X,Z\right>_g +\left<Y,Z\right>_g ,$\\
for arbitrary (homogeneous) $ X, Y, Z \in {\rm Vect}(M)$ and $f \in C^{\infty}(M)$. We will say that a Riemannian metric is
even if and only if it has degree zero. Similarly, we will say that a Riemannian metric is odd if and only
if it has degree one. Any Riemannian metric we consider will be either even or odd as we will only be
considering homogeneous metrics.\\
\indent Similar to \cite{CTZZ}, we give some notions about noncommutative super surfaces. Let us fix a region $U$ in $R^2$ and write the coordinate of a point $t$ in $U$ as $(t_1 ,t_2)$. Let $h$ be a real indeterminate, and denote by $R[[\overline{h}]]$ the ring of formal power
series in $\overline{h}$. Let $\mathcal{A}$ be the set of the formal power series in $\overline{h}$ with coefficients being real smooth functions on $U$. Namely, every element of $\mathcal{A}$ is of the form $\sum_{i\geq0}f_i\overline{h^i}$, where $f_i$ are smooth functions on $U$. Then $\mathcal{A}$ is an $R[[\overline{h}]]$-module in an obvious way. Let $\bigwedge^P(\xi_1,\cdot\cdot\cdot,\xi_r)$ be Grassmann algebra, we take $a,b \in \mathcal{A}\otimes\bigwedge$, then $a=\sum_{1\leq i_1\leq\cdot\cdot\cdot\leq i_k\leq P}f_{i_1\cdot\cdot\cdot i_k}\xi^{i_1}\cdot\cdot\cdot\xi^{i_k},$ $b=\sum_{1\leq j_1\leq\cdot\cdot\cdot\leq j_q\leq P}g_{j_1\cdot\cdot\cdot j_q}\xi^{j_1}\cdot\cdot\cdot\xi^{j_q},$ where $f_{i_1\cdot\cdot\cdot i_k}, g_{j_1\cdot\cdot\cdot j_q} \in \mathcal{A}.$ Define their star product (or more precisely, Moyal product)
\begin{align}\label{a1}
a\ast b:=\sum_{1\leq i_1\leq\cdot\cdot\cdot\leq i_k\leq P,1\leq j_1\leq\cdot\cdot\cdot\leq j_q\leq P}f_{i_1\cdot\cdot\cdot i_k}\ast g_{j_1\cdot\cdot\cdot j_q}\xi^{i_1}\wedge\cdot\cdot\cdot\wedge\xi^{i_k}\wedge\xi^{j_1}\cdot\cdot\cdot\wedge\xi^{j_q},\nonumber\\
\end{align}
where $f_{i_1\cdot\cdot\cdot i_k}\ast g_{j_1\cdot\cdot\cdot j_q}$ is the star product of $f_{i_1\cdot\cdot\cdot i_k}$ and $g_{j_1\cdot\cdot\cdot j_q}$ in $\mathcal{A}$.

Obviously, star product in $\mathcal{A}\otimes\bigwedge$ is associative. For the following part, we will denote $\mathcal{A}\otimes\bigwedge$ by $\widetilde{\mathcal{A}}.$

Let $x^I:=(t_1 ,t_2,\xi_1,\cdot\cdot\cdot,\xi_P),$ then
\begin{align}\label{a2}
\partial_{x^I}(a\ast b)=(\partial_{x^I}a)\ast b+(-1)^{|\partial_{x^I}||a|}a\ast\partial_{x^I}b,\nonumber\\
\end{align}
where the operators $\partial_{x^I}$ are derivations of the algebra $\widetilde{\mathcal{A}}$, $a,b$ are homogeneous and $|\partial_{t_1}|=|\partial_{t_2}|=0,~|\partial_{\xi_\alpha}|=1~(1\leq\alpha \leq P).$

\begin{defn}\label{def5}
Let $TX=\widetilde{\mathcal{A}}\{\frac{\partial}{\partial t_1},\frac{\partial}{\partial t_2},\frac{\partial}{\partial \xi^1},\cdot\cdot\cdot,\frac{\partial}{\partial \xi^P}\}$ be the free-left $\widetilde{\mathcal{A}}$-module and $\widetilde{T}X=\{\frac{\partial}{\partial t_1},\frac{\partial}{\partial t_2},\frac{\partial}{\partial \xi^1},\cdot\cdot\cdot,\frac{\partial}{\partial \xi^P}\}\widetilde{\mathcal{A}}$ be the free-right $\widetilde{\mathcal{A}}$-module. Let $g:TX\otimes \widetilde{T}X\rightarrow \widetilde{\mathcal{A}}$ be a double $\widetilde{\mathcal{A}}$ module map defined by $g_{IJ}=\left<\partial_{x^I},\partial_{x^J}\right>_g,$ and let $g^0=g~mod~\overline{h}$, which is a inverse matrix of smooth functions on $U\otimes\bigwedge^P$, then we call $g$ the metric of the
noncommutative super surface $TX$.
\end{defn}

Given a noncommutative super surface $TX$ with a metric g, there exists a unique matrix $[g^{JK}]$
over $\widetilde{\mathcal{A}}$, which is the right inverse of $g$, i.e.,
$$g_{IJ}\ast g^{JK}=\delta_I^K,$$
where we have used Einstein's convention of summing over repeated indices.

\begin{defn}\label{def4} For $a,b,c_i~(1\leq i\leq P),$ then $\partial_{x^I}$ in $\widetilde{\mathcal{A}}$ generate the left $\widetilde{\mathcal{A}}$-module $TX$ and right $\widetilde{\mathcal{A}}$-module $\widetilde{T}X,$ defined by
 $$TX=a\ast\partial_{t_1}+b\ast\partial_{t_2}+\sum_{i=1}^Pc_i\ast\partial_{\xi_i},~~~\widetilde{T}X=\partial_{t_1}\ast a+\partial_{t_2}\ast b+\sum_{i=1}^P\partial_{\xi_i}\ast c_i.$$
That is the left and right tangent bundles of the noncommutative super surface respectively, we call also $TX$ a noncommutative super surface.
\end{defn}

\begin{prop}\label{prop1}
The metric induces a homomorphism of two-sided $\widetilde{\mathcal{A}}$-modules,
$$g: TX\otimes_{R[[\overline{h}]]} \widetilde{T}X\longrightarrow\widetilde{ \mathcal{A}}$$
defined for any $Z=Z^I\ast\partial_{x^I}\in TX$ and $\widetilde{Z}=\partial_{x^I}\ast \widetilde{Z}^I\in \widetilde{T}X$ by
$$Z\otimes \widetilde{Z}\mapsto \left<Z,\widetilde{Z}\right>_g=Z^I\ast g_{IJ}\ast \widetilde{Z}^J.$$
\end{prop}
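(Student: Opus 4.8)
The plan is to verify that the assignment $Z \otimes \widetilde{Z} \mapsto Z^I \ast g_{IJ} \ast \widetilde{Z}^J$ is well-defined and bilinear over $\widetilde{\mathcal{A}}$ from the left on the first factor and from the right on the second factor, that is, a genuine two-sided module homomorphism. First I would note that since $TX$ is free as a left $\widetilde{\mathcal{A}}$-module on the basis $\{\partial_{x^I}\}$ and $\widetilde{T}X$ is free as a right $\widetilde{\mathcal{A}}$-module on the same symbols, every $Z \in TX$ has a unique expression $Z = Z^I \ast \partial_{x^I}$ with $Z^I \in \widetilde{\mathcal{A}}$, and similarly $\widetilde{Z} = \partial_{x^J} \ast \widetilde{Z}^J$; hence the formula $\langle Z, \widetilde{Z}\rangle_g := Z^I \ast g_{IJ} \ast \widetilde{Z}^J$ unambiguously defines a map on the tensor product over $R[[\overline{h}]]$, using associativity of the star product on $\widetilde{\mathcal{A}}$ (already observed after \eqref{a1}) to make sense of the triple product.

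Next I would check the two linearity statements. For left linearity in the first argument: given $f \in \widetilde{\mathcal{A}}$ and $Z = Z^I \ast \partial_{x^I}$, we have $f \ast Z = (f \ast Z^I) \ast \partial_{x^I}$, so
\begin{align}\label{leftlin}
\langle f \ast Z, \widetilde{Z}\rangle_g = (f \ast Z^I) \ast g_{IJ} \ast \widetilde{Z}^J = f \ast (Z^I \ast g_{IJ} \ast \widetilde{Z}^J) = f \ast \langle Z, \widetilde{Z}\rangle_g,
\end{align}
again by associativity. Symmetrically, for right linearity in the second argument, $\widetilde{Z} \ast f = \partial_{x^J} \ast (\widetilde{Z}^J \ast f)$ gives $\langle Z, \widetilde{Z} \ast f\rangle_g = Z^I \ast g_{IJ} \ast \widetilde{Z}^J \ast f = \langle Z, \widetilde{Z}\rangle_g \ast f$. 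Additivity in each slot is immediate from distributivity of $\ast$ over addition. Finally I would record that the map descends from $TX \times \widetilde{T}X$ to $TX \otimes_{R[[\overline{h}]]} \widetilde{T}X$ because it is $R[[\overline{h}]]$-bilinear — indeed $R[[\overline{h}]]$ is central in $\widetilde{\mathcal{A}}$ so scalars pass freely across $\ast$ — and the resulting map is then the claimed homomorphism of two-sided $\widetilde{\mathcal{A}}$-modules, where the left action on the target $\widetilde{\mathcal{A}}$ comes from the first factor and the right action from the second.

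There is essentially no serious obstacle here: the statement is a formal consequence of (i) the freeness of $TX$ and $\widetilde{T}X$ over $\widetilde{\mathcal{A}}$ on the coordinate vector fields, and (ii) the associativity of the Moyal product on $\widetilde{\mathcal{A}}$. The only point demanding a little care is making sure the notation is consistent — that the $g_{IJ} = \langle \partial_{x^I}, \partial_{x^J}\rangle_g$ of Definition~\ref{def5} are taken to be elements of $\widetilde{\mathcal{A}}$ sitting between the component functions, and that no implicit sign from the $\mathbb{Z}_2$-grading is needed in this particular assertion (signs will enter only later, e.g. in symmetry of $g$ and in the Leibniz rule \eqref{a2}, not in the bare bilinearity over $\ast$). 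So the proof is a short three-line computation: well-definedness via uniqueness of components, then the two associativity identities displayed above.
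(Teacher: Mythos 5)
Your argument is correct and is exactly the standard verification: the paper in fact states Proposition~\ref{prop1} without any proof, and the content is precisely what you supply — well-definedness from the freeness of $TX$ and $\widetilde{T}X$ on the $\partial_{x^I}$, left/right $\widetilde{\mathcal{A}}$-linearity from associativity of the Moyal product, and $R[[\overline{h}]]$-balancedness (centrality of the scalars) to descend to the tensor product. Your remark that no $\mathbb{Z}_2$-signs enter here is also consistent with the paper's conventions, since the components are placed on the outside of $g_{IJ}$ in the pairing $Z^I\ast g_{IJ}\ast\widetilde{Z}^J$.
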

We have $\left<a\partial_{x^I},\partial_{x^J}b\right>_g=a\ast g_{IJ}\ast b$ and $|\left<\partial_{x^I},\partial_{x^J}\right>_g|=|g|+|\partial_{x^I}|+|\partial_{x^J}|.$ Here we don't assume that $g_{IJ}=(-1)^{|\partial_{x^I}||\partial_{x^J}|}g_{JI}.$

Next we define the Levi-Civita connections $\nabla$ and $\widetilde{\nabla}$. We define $\Gamma_{IJ}^L$ and $\widetilde{\Gamma}_{IJ}^L$ in $\widetilde{\mathcal{A}}$ such that
\begin{align}\label{a7000}
&\Gamma_{IJ}^L:=\Gamma_{IJK}\ast g^{KL},~~~\Gamma_{IJK}:=\frac{1}{2}\left(\frac{\partial {g_{JK}}}{\partial{x^I}}+(-1)^{|\partial_{x^I}||\partial_{x^J}|}\frac{\partial {g_{IK}}}{\partial{x^J}}-(-1)^{|\partial_{x^K}|(|\partial_{x^I}|+|\partial_{x^J}|)}\frac{\partial {g_{IJ}}}{\partial{x^K}}\right);\nonumber\\
&\widetilde{\Gamma}_{IJ}^L:=g^{KL}\ast \widetilde{\Gamma}_{IJL},~~~\widetilde{\Gamma}_{IJL}=(-1)^{|\partial_{x^L}|(|\partial_{x^I}|+|\partial_{x^J}|)}\Gamma_{IJL};\nonumber\\
&\nabla_{\partial_{x^I}}\partial_{x^J}=\Gamma_{IJ}^L\partial_{x^L},~~~\widetilde{\nabla}_{\partial_{x^I}}\partial_{x^J}=\partial_{x^L}\widetilde{\Gamma}_{IJ}^L.\nonumber\\
\end{align}
Define
$$\nabla_{\partial_{x^I}}(f\partial{x^J})=(\partial_{x^I}f)\partial{x^J}+(-1)^{|f||\partial{x^I}|}f\ast \nabla_{\partial_{x^I}}\partial_{x^J}$$
and
$$\widetilde{\nabla}_{\partial_{x^I}}(\partial{x^J}f)=(-1)^{|\partial{x^J}||\partial{x^I}|}\partial_{x^J}(\partial_{x^I}f)+ \widetilde{\nabla}_{\partial_{x^I}}\partial{x^J}\ast f.$$
Then we get the following lemma
\begin{lem}\label{lem1}
For all $Z\in TX,$ $\widetilde{Z}\in \widetilde{T}X$ and $f\in \widetilde{\mathcal{A}},$
\begin{align}\label{a4}
&\nabla_{\partial_{x^I}}(f\ast Z)=(\partial_{x^I}f)\ast Z+(-1)^{|f||\partial_{x^I}|}f\ast\nabla_{\partial_{x^I}} Z,\nonumber\\
&\widetilde{\nabla}_{\partial_{x^I}}(\widetilde{Z}\ast f)=(\widetilde{\nabla}_{\partial_{x^I}}\widetilde{Z})\ast f+(-1)^{|\widetilde{Z}||\partial_{x^I}|}\widetilde{Z}\ast \partial_{x^I}f.\nonumber\\
\end{align}
\end{lem}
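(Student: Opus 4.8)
The plan is to deduce the two identities from the defining formulas for $\nabla$ and $\widetilde\nabla$ on the coordinate frame, by expanding a general element in components and feeding in the graded Leibniz rule \eqref{a2} for the derivations $\partial_{x^I}$ together with associativity of the star product. Since $\nabla_{\partial_{x^I}}$ and $\widetilde\nabla_{\partial_{x^I}}$ are additive, and both sides of the claimed identities are additive and $R[[\overline h]]$-linear in the obvious sense, it suffices to verify them for homogeneous $Z$, $\widetilde Z$ and $f$. Write $Z=Z^J\ast\partial_{x^J}$ and $\widetilde Z=\partial_{x^J}\ast\widetilde Z^J$ (Einstein summation) with homogeneous components; note that on every nonzero term $|Z^J|\equiv|Z|+|\partial_{x^J}|$ and $|\widetilde Z^J|\equiv|\widetilde Z|+|\partial_{x^J}|\pmod 2$.

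For the first identity I would use associativity to write $f\ast Z=(f\ast Z^J)\ast\partial_{x^J}$, apply the definition of $\nabla_{\partial_{x^I}}$ to $(f\ast Z^J)\,\partial_{x^J}$, namely $\nabla_{\partial_{x^I}}\bigl((f\ast Z^J)\,\partial_{x^J}\bigr)=\bigl(\partial_{x^I}(f\ast Z^J)\bigr)\partial_{x^J}+(-1)^{(|f|+|Z^J|)|\partial_{x^I}|}(f\ast Z^J)\ast\nabla_{\partial_{x^I}}\partial_{x^J}$, and then rewrite $\partial_{x^I}(f\ast Z^J)=(\partial_{x^I}f)\ast Z^J+(-1)^{|\partial_{x^I}||f|}f\ast(\partial_{x^I}Z^J)$ by \eqref{a2}. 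The piece $(\partial_{x^I}f)\ast Z^J\,\partial_{x^J}$ reassembles, on summing over $J$, to $(\partial_{x^I}f)\ast Z$, the first term on the right-hand side of the lemma. In the two remaining pieces one factors out the common sign $(-1)^{|f||\partial_{x^I}|}$ and the common left factor $f$, using $(-1)^{(|f|+|Z^J|)|\partial_{x^I}|}=(-1)^{|f||\partial_{x^I}|}(-1)^{|Z^J||\partial_{x^I}|}$; what remains inside the bracket is exactly $(\partial_{x^I}Z^J)\partial_{x^J}+(-1)^{|Z^J||\partial_{x^I}|}Z^J\ast\nabla_{\partial_{x^I}}\partial_{x^J}=\nabla_{\partial_{x^I}}(Z^J\partial_{x^J})$, which sums over $J$ to $\nabla_{\partial_{x^I}}Z$. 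Together these give the first identity.

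The second identity follows from the mirror-image computation on the right $\widetilde{\mathcal A}$-module. Using associativity to write $\widetilde Z\ast f=\partial_{x^J}\ast(\widetilde Z^J\ast f)$, applying the definition of $\widetilde\nabla_{\partial_{x^I}}$ to $\partial_{x^J}\,(\widetilde Z^J\ast f)$, and expanding $\partial_{x^I}(\widetilde Z^J\ast f)=(\partial_{x^I}\widetilde Z^J)\ast f+(-1)^{|\partial_{x^I}||\widetilde Z^J|}\widetilde Z^J\ast\partial_{x^I}f$ by \eqref{a2}, the term carrying $\partial_{x^I}f$ acquires the sign $(-1)^{|\partial_{x^J}||\partial_{x^I}|}(-1)^{|\partial_{x^I}||\widetilde Z^J|}=(-1)^{|\partial_{x^I}|(|\partial_{x^J}|+|\widetilde Z^J|)}=(-1)^{|\widetilde Z||\partial_{x^I}|}$, the last equality being where homogeneity of $\widetilde Z$ enters; summing over $J$ this term becomes $(-1)^{|\widetilde Z||\partial_{x^I}|}\widetilde Z\ast\partial_{x^I}f$. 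The two pieces not involving $\partial_{x^I}f$, namely $(-1)^{|\partial_{x^J}||\partial_{x^I}|}\partial_{x^J}\ast(\partial_{x^I}\widetilde Z^J)\ast f+(\widetilde\nabla_{\partial_{x^I}}\partial_{x^J})\ast\widetilde Z^J\ast f$, are obtained by right-multiplying the defining expansion of $\widetilde\nabla_{\partial_{x^I}}(\partial_{x^J}\ast\widetilde Z^J)$ by $f$, hence sum to $(\widetilde\nabla_{\partial_{x^I}}\widetilde Z)\ast f$. Together these give the second identity.

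The computation is essentially bookkeeping, so I do not anticipate a conceptual obstacle; the one place that genuinely needs care is the sign accounting. The crucial points are that $|f\ast Z^J|=|f|+|Z^J|$, so the prefactor in the definition of $\nabla$ on the frame splits cleanly, and that the Koszul sign $(-1)^{|\partial_{x^I}||Z^J|}$ (respectively $(-1)^{|\partial_{x^I}||\widetilde Z^J|}$) produced by commuting $\partial_{x^I}$ past the component function in \eqref{a2} is precisely the sign already built into the definition of $\nabla_{\partial_{x^I}}$ on the frame (respectively combines with the frame sign $(-1)^{|\partial_{x^J}||\partial_{x^I}|}$ of $\widetilde\nabla$ to produce $(-1)^{|\widetilde Z||\partial_{x^I}|}$), so that the two signs conspire to rebuild $\nabla_{\partial_{x^I}}Z$ (respectively $(-1)^{|\widetilde Z||\partial_{x^I}|}\widetilde Z\ast\partial_{x^I}f$) rather than a sign-twisted variant of it. If either identification failed one would land on a ``twisted Leibniz rule'' instead, so checking these signs is the heart of the argument.
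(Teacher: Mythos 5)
Your proof is correct and matches the paper's own argument essentially step for step: the paper likewise reduces to a single homogeneous term $Z=f_1\partial_{x^J}$ (respectively $\widetilde Z=\partial_{x^J}f_1$), applies the defining formula for $\nabla_{\partial_{x^I}}$ (respectively $\widetilde\nabla_{\partial_{x^I}}$) on $(f\ast f_1)\partial_{x^J}$, expands with the graded Leibniz rule \eqref{a2}, and regroups using exactly the sign splitting $(-1)^{(|f|+|f_1|)|\partial_{x^I}|}=(-1)^{|f||\partial_{x^I}|}(-1)^{|f_1||\partial_{x^I}|}$ that you identify as the crux. No substantive difference in method or in the sign bookkeeping.
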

\begin{proof}
Let $Z=f_1\partial_{x^J}$, $\widetilde{Z}=\partial_{x^J}f_1$, we have
\begin{align}\label{a5}
\nabla_{\partial_{x^I}}(f\ast Z)&=\nabla_{\partial_{x^I}}[(f\ast f_1)\partial_{x^J}]\nonumber\\
&=\partial_{x^I}(f\ast f_1)\partial_{x^J}+(-1)^{(|f|+|f_1|)|\partial_{x^I}|}(f\ast f_1)\ast \nabla_{\partial_{x^I}}{\partial_{x^J}}\nonumber\\
&=[(\partial_{x^I}f)\ast f_1+(-1)^{|f||\partial_{x^I}|}f\ast \partial_{x^I}{f_1}]\partial_{x^J}+(-1)^{(|f|+|f_1|)|\partial_{x^I}|}(f\ast f_1)\ast \nabla_{\partial_{x^I}}{\partial_{x^J}}\nonumber\\
&=(\partial_{x^I}f)\ast (f_1\partial_{x^J})+(-1)^{|f||\partial_{x^I}|}[(f\ast \partial_{x^I}{f_1})\partial_{x^J}+(-1)^{|f_1||\partial_{x^I}|}f\ast f_1\ast \nabla_{\partial_{x^I}}{\partial_{x^J}}]\nonumber\\
&=(\partial_{x^I}f)\ast Z+(-1)^{|f||\partial_{x^I}|}f\ast\nabla_{\partial_{x^I}} Z,\nonumber\\
\end{align}
and
\begin{align}\label{a6}
\widetilde{\nabla}_{\partial_{x^I}}(\widetilde{Z}\ast f)&=\widetilde{\nabla}_{\partial_{x^I}}(\partial_{x^J}f_1\ast f)\nonumber\\
&=(-1)^{|\partial_{x^I}||\partial_{x^J}|}\partial_{x^J}\ast[\partial_{x^I}(f_1\ast f)]+\widetilde{\nabla}_{\partial_{x^I}}{\partial_{x^J}}\ast (f_1\ast f)\nonumber\\
&=(-1)^{|\partial_{x^I}||\partial_{x^J}|}\partial_{x^J}\ast[\partial_{x^I}f_1\ast f+(-1)^{|\partial_{x^I}||f_1|}f_1\ast \partial_{x^I}f]+(\widetilde{\nabla}_{\partial_{x^I}}{\partial_{x^J}}\ast f_1)\ast f\nonumber\\
&=(-1)^{|\partial_{x^I}||\partial_{x^J}|}\partial_{x^J}\ast(\partial_{x^I}f_1)\ast f+(-1)^{|\partial_{x^I}|(|\partial_{x^J}|+|f_1|)}(\partial_{x^J}\ast f_1)\ast \partial_{x^I}f\nonumber\\
&+(\widetilde{\nabla}_{\partial_{x^I}}{\partial_{x^J}}\ast f_1)\ast f\nonumber\\
&=(\widetilde{\nabla}_{\partial_{x^I}}\widetilde{Z})\ast f+(-1)^{|\widetilde{Z}||\partial_{x^I}|}\widetilde{Z}\ast \partial_{x^I}f,\nonumber\\
\end{align}
then we get (\ref{a4}).
\end{proof}

\begin{prop}\label{prop2}
For $Z\in TX,$ $\widetilde{Z}\in \widetilde{T}X$, when $|g|=0,$ $g_{JK}=(-1)^{|\partial_{x^J}||\partial_{x^K}|}g_{KJ},$ then the connections are metric compatible in the following sense:
\begin{align}\label{a8}
\partial_{x^I}\left<Z,\widetilde{Z}\right>_g=\left<\nabla_{\partial_{x^I}}Z,\widetilde{Z}\right>_g+(-1)^{|\partial_{x^I}||Z|}\left<Z,\widetilde{\nabla}_{\partial_{x^I}}\widetilde{Z}\right>_g.\nonumber\\
\end{align}
\end{prop}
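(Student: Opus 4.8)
The plan is to reduce (\ref{a8}) first to the case of coordinate vector fields and then to a pointwise identity among the symbols $\Gamma_{IJK}$, which is finally verified by expanding the definition in (\ref{a7000}).

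\textbf{Step 1: reduction to a coordinate basis.} Since $TX$ and $\widetilde{T}X$ are free $\widetilde{\mathcal{A}}$-modules on the $\partial_{x^I}$, it suffices to prove (\ref{a8}) for $Z=\partial_{x^J}$ and $\widetilde{Z}=\partial_{x^K}$; the general case follows because both sides of (\ref{a8}) obey the same Leibniz-type scaling. Precisely, if (\ref{a8}) holds for a homogeneous pair $(Z,\widetilde{Z})$, one checks it for $(h\ast Z,\widetilde{Z})$ and for $(Z,\widetilde{Z}\ast h)$ with $h\in\widetilde{\mathcal{A}}$ homogeneous: on the left-hand side one uses $\langle h\ast Z,\widetilde{Z}\rangle_g=h\ast\langle Z,\widetilde{Z}\rangle_g$ and $\langle Z,\widetilde{Z}\ast h\rangle_g=\langle Z,\widetilde{Z}\rangle_g\ast h$ from Proposition \ref{prop1} together with the derivation property (\ref{a2}) of $\partial_{x^I}$, while on the right-hand side one uses Lemma \ref{lem1}. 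The ``error'' terms $(\partial_{x^I}h)\ast\langle Z,\widetilde{Z}\rangle_g$ produced on the two sides agree, and the remaining terms match by the inductive hypothesis; additivity then gives the statement for all $Z=Z^I\ast\partial_{x^I}$ and $\widetilde{Z}=\partial_{x^J}\ast\widetilde{Z}^J$. The only point requiring attention is that the sign $(-1)^{|\partial_{x^I}||Z|}$ is unchanged under $\widetilde{Z}\mapsto\widetilde{Z}\ast h$ and picks up exactly the factor $(-1)^{|\partial_{x^I}||h|}$ under $Z\mapsto h\ast Z$, which is precisely the factor needed to pull $h$ out of $\widetilde\nabla_{\partial_{x^I}}$ in the Leibniz rule.

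\textbf{Step 2: the coordinate identity.} For $Z=\partial_{x^J}$, $\widetilde{Z}=\partial_{x^K}$, using $\nabla_{\partial_{x^I}}\partial_{x^J}=\Gamma_{IJ}^L\partial_{x^L}$, $\widetilde\nabla_{\partial_{x^I}}\partial_{x^K}=\partial_{x^L}\widetilde\Gamma_{IK}^L$ and Proposition \ref{prop1}, equation (\ref{a8}) becomes
\[
\partial_{x^I}g_{JK}=\Gamma_{IJ}^L\ast g_{LK}+(-1)^{|\partial_{x^I}||\partial_{x^J}|}\,g_{JL}\ast\widetilde\Gamma_{IK}^L .
\]
Because $g\equiv g^0\pmod{\overline h}$ with $g^0$ invertible, the matrix $[g_{IJ}]$ is invertible over $\widetilde{\mathcal{A}}$ and its right inverse $[g^{IJ}]$ is in fact two-sided. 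Contracting the $g^{\bullet\bullet}$'s in the definitions (\ref{a7000}) against $g_{\bullet\bullet}$ then gives $\Gamma_{IJ}^L\ast g_{LK}=\Gamma_{IJK}$ and $g_{JL}\ast\widetilde\Gamma_{IK}^L=\widetilde\Gamma_{IKJ}=(-1)^{|\partial_{x^J}|(|\partial_{x^I}|+|\partial_{x^K}|)}\Gamma_{IKJ}$. Using $(-1)^{2|\partial_{x^I}||\partial_{x^J}|}=1$, the identity to be proved collapses to
\[
\Gamma_{IJK}+(-1)^{|\partial_{x^J}||\partial_{x^K}|}\,\Gamma_{IKJ}=\partial_{x^I}g_{JK}.
\]

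\textbf{Step 3: the symbol identity and the main obstacle.} This last equation is the core of the proof. Expanding $\Gamma_{IJK}$ and $\Gamma_{IKJ}$ from (\ref{a7000}) and multiplying the second by $(-1)^{|\partial_{x^J}||\partial_{x^K}|}$, the two terms in $\partial_{x^J}g_{IK}$ cancel and the two terms in $\partial_{x^K}g_{IJ}$ cancel, leaving $\tfrac12\big(\partial_{x^I}g_{JK}+(-1)^{|\partial_{x^J}||\partial_{x^K}|}\partial_{x^I}g_{KJ}\big)$; the hypothesis $g_{JK}=(-1)^{|\partial_{x^J}||\partial_{x^K}|}g_{KJ}$ converts this to $\partial_{x^I}g_{JK}$. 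The assumptions $|g|=0$ and the $\mathbb{Z}_2$-symmetry of $g$ are exactly what make Step 2 and Step 3 work: the former keeps the signs in (\ref{a8}) free of $|g|$ and makes the degrees on both sides match, while the latter produces the pairwise cancellations. I expect the main difficulty to be entirely bookkeeping of signs — in particular verifying that the factor $(-1)^{|\partial_{x^M}|(|\partial_{x^I}|+|\partial_{x^K}|)}$ built into $\widetilde\Gamma_{IKM}$ combines with the $(-1)^{|\partial_{x^I}||\partial_{x^J}|}$ in (\ref{a8}) to give precisely the coefficient $(-1)^{|\partial_{x^J}||\partial_{x^K}|}$ needed for the cancellation, and that the reductions in Step 1 respect this same sign.
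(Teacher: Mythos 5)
Your proof is correct and follows essentially the same route as the paper's: reduce via the Leibniz rules to the coordinate identity $\Gamma_{IJK}+(-1)^{|\partial_{x^J}||\partial_{x^K}|}\Gamma_{IKJ}=\partial_{x^I}g_{JK}$ and verify it by expanding (\ref{a7000}) and invoking the graded symmetry of $g$. Your explicit justification that the right inverse $[g^{IJ}]$ is two-sided (needed to contract $\Gamma_{IJ}^L\ast g_{LK}=\Gamma_{IJK}$) is a point the paper uses silently, but otherwise the two arguments coincide.
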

\begin{proof}
Let $Z=f_1\partial_{x^J},~\widetilde{Z}=\partial_{x^K}f_2,$ then
\begin{align}\label{a7003}
\partial_{x^I}\left<Z,\widetilde{Z}\right>_g&=\partial_{x^I}\left<f_1\partial_{x^J},\partial_{x^K}f_2\right>_g\nonumber\\
&=\partial_{x^I}(f_1\ast g_{JK}\ast f_2)\nonumber\\
&=\partial_{x^I}f_1\ast g_{JK}\ast f_2+(-1)^{|\partial_{x^I}||f_1|}f_1\ast \partial_{x^I}g_{JK}\ast f_2\nonumber\\
&+(-1)^{|\partial_{x^I}|(|f_1|+|g_{JK}|)}f_1\ast g_{JK}\ast \partial_{x^I}f_2,\nonumber\\
\end{align}
and
\begin{align}\label{a7004}
&\left<\nabla_{\partial_{x^I}}Z,\widetilde{Z}\right>_g+(-1)^{|\partial_{x^I}||Z|}\left<Z,\widetilde{\nabla}_{\partial_{x^I}}\widetilde{Z}\right>_g\nonumber\\
&=\left<\nabla_{\partial_{x^I}}(f_1\partial_J),\partial_{x^K}f_2\right>_g+(-1)^{|\partial_{x^I}||(f_1\partial_{x^J})|}\left<(f_1\partial_{x^J}),\widetilde{\nabla}_{\partial_{x^I}}(\partial_{x^K}f_2)\right>_g\nonumber\\
&=(\partial_{x^I}f_1)\ast g_{JK}\ast f_2+(-1)^{|\partial_{x^I}||f_1|}f_1\ast\left<\nabla_{\partial_{x^I}}\partial_{x^J},\partial_{x^K}\right>_g\ast f_2+(-1)^{|\partial_{x^I}|(|f_1|+|\partial_{x^J}|)}\nonumber\\
&(-1)^{|\partial_{x^K}||\partial_{x^I}|}f_1\ast g_{JK}\ast \partial_{x^I}f_2+(-1)^{|\partial_{x^I}|(|f_1|+|\partial_{x^J}|)}f_1\left<\partial_{x^J},\widetilde{\nabla}_{\partial_{x^I}}\partial_{x^K}\right>_g\ast f_2.\nonumber\\
\end{align}
Because by $|g|=0$,
\begin{align}\label{a7005}
(-1)^{|\partial_{x^I}|(|f_1|+|\partial_{x^J}|)}(-1)^{|\partial_{x^K}||\partial_{x^I}|}=(-1)^{|\partial_{x^I}|(|f_1|+|g_{JK})|}.\nonumber\\
\end{align}
We only prove
\begin{align}\label{a7006}
\left<\nabla_{\partial_{x^I}}\partial_{x^J},\partial_{x^K}\right>_g+(-1)^{|\partial_{x^I}||\partial_{x^J}|}\left<\partial_{x^J},\widetilde{\nabla}_{\partial_{x^I}}\partial_{x^K}\right>_g= \partial_{x^I}g_{JK}.\nonumber\\
\end{align}
Then by (\ref{a7000}), we have
\begin{align}\label{a7007}
&\left<\nabla_{\partial_{x^I}}\partial_{x^J},\partial_{x^K}\right>_g+(-1)^{|\partial_{x^I}||\partial_{x^J}|}\left<\partial_{x^J},\widetilde{\nabla}_{\partial_{x^I}}\partial_{x^K}\right>_g\nonumber\\
&=\left<\Gamma_{IJ}^L\partial_{x^L},\partial_{x^K}\right>_g+(-1)^{|\partial_{x^I}||\partial_{x^J}|}\left<\partial_{x^J},\partial_{x^L}\widetilde{\Gamma}_{IK}^L\right>_g\nonumber\\
&=\Gamma_{IJ}^L\ast g_{JK}+(-1)^{|\partial_{x^I}||\partial_{x^J}|}g_{JL}\ast \widetilde{\Gamma}_{IK}^L\nonumber\\
&=\Gamma_{IJK}+(-1)^{|\partial_{x^I}||\partial_{x^J}|}\widetilde{\Gamma}_{IKJ}.\nonumber\\
\end{align}
By (\ref{a7000})-(\ref{a7003}) and $\widetilde{\Gamma}_{IKJ}=(-1)^{|\partial_{x^J}|(|\partial_{x^I}|+|\partial_{x^K}|)}\Gamma_{IKJ},$ we have
\begin{align}\label{a7008}
&\Gamma_{IJK}+(-1)^{|\partial_{x^I}||\partial_{x^J}|}\widetilde{\Gamma}_{IKJ}\nonumber\\
&=\frac{1}{2}\left(\frac{\partial {g_{JK}}}{\partial{x^I}}+(-1)^{|\partial_{x^I}||\partial_{x^J}|}\frac{\partial {g_{IK}}}{\partial{x^J}}-(-1)^{|\partial_{x^K}|(|\partial_{x^I}|+|\partial_{x^J}|)}\frac{\partial {g_{IJ}}}{\partial{x^K}}\right)+(-1)^{|\partial_{x^J}||\partial_{x^K}|}\nonumber\\
&+\frac{1}{2}\left(\frac{\partial{g_{KJ}}}{\partial{x^I}}+(-1)^{|\partial_{x^I}||\partial_{x^K}|}\frac{\partial {g_{IJ}}}{\partial{x^J}}-(-1)^{|\partial_{x^J}|(|\partial_{x^I}|+|\partial_{x^K}|)}\frac{\partial {g_{IK}}}{\partial{x^K}}\right)\nonumber\\
&=\frac{1}{2}\partial_{x^I}[g_{JK}+(-1)^{|\partial_{x^J}||\partial_{x^K}|}g_{KJ}]\nonumber\\
&=\partial_{x^I}g_{JK}.\nonumber\\
\end{align}
Therefore, (\ref{a8}) holds.
\end{proof}
\begin{prop}\label{prop3}
When $g_{IJ}=(-1)^{|\partial_{x^J}||\partial_{x^I}|}g_{JI},$ then torsion vanishes in the following sense:
\begin{align}\label{a9}
T^\nabla=0,~~~T^{\widetilde{\nabla}}=0.
\end{align}
\end{prop}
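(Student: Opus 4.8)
The plan is to reduce the vanishing of both torsions to a single graded-symmetry property of the Christoffel symbols. On the coordinate basis the super-torsion of $\nabla$ is
$$T^\nabla(\partial_{x^I},\partial_{x^J})=\nabla_{\partial_{x^I}}\partial_{x^J}-(-1)^{|\partial_{x^I}||\partial_{x^J}|}\nabla_{\partial_{x^J}}\partial_{x^I}-[\partial_{x^I},\partial_{x^J}],$$
and since the coordinate derivations super-commute we have $[\partial_{x^I},\partial_{x^J}]=0$, so the bracket term drops out. By the definition of $\nabla$ in (\ref{a7000}) this gives $T^\nabla(\partial_{x^I},\partial_{x^J})=\big(\Gamma_{IJ}^L-(-1)^{|\partial_{x^I}||\partial_{x^J}|}\Gamma_{JI}^L\big)\partial_{x^L}$, and because $\Gamma_{IJ}^L=\Gamma_{IJK}\ast g^{KL}$ it suffices to prove the lower-index identity $\Gamma_{IJK}=(-1)^{|\partial_{x^I}||\partial_{x^J}|}\Gamma_{JIK}$. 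Since $T^\nabla$ is $\widetilde{\mathcal{A}}$-bilinear (a routine check using Lemma \ref{lem1}), the vanishing on the basis then propagates to all of $TX$.

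To establish $\Gamma_{IJK}=(-1)^{|\partial_{x^I}||\partial_{x^J}|}\Gamma_{JIK}$ I would substitute the Koszul-type formula from (\ref{a7000}) for $\Gamma_{JIK}$ and multiply through by $(-1)^{|\partial_{x^I}||\partial_{x^J}|}$, keeping track of the $\mathbb{Z}_2$-signs. The first two terms of $\Gamma_{JIK}$ are exchanged and pick up exactly the sign needed to reproduce the first two terms of $\Gamma_{IJK}$ (using $(-1)^{2|\partial_{x^I}||\partial_{x^J}|}=1$). The third term involves $\partial_{x^K}g_{JI}$; here is the only place the hypothesis enters: replacing $g_{JI}$ by $(-1)^{|\partial_{x^I}||\partial_{x^J}|}g_{IJ}$, the two stray factors of $(-1)^{|\partial_{x^I}||\partial_{x^J}|}$ cancel and one is left with precisely $-(-1)^{|\partial_{x^K}|(|\partial_{x^I}|+|\partial_{x^J}|)}\partial_{x^K}g_{IJ}$, the third term of $\Gamma_{IJK}$. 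Hence $\Gamma_{IJK}=(-1)^{|\partial_{x^I}||\partial_{x^J}|}\Gamma_{JIK}$, so $\Gamma_{IJ}^L=(-1)^{|\partial_{x^I}||\partial_{x^J}|}\Gamma_{JI}^L$ and $T^\nabla=0$.

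For $\widetilde\nabla$ the argument runs in parallel. By (\ref{a7000}) we have $\widetilde\nabla_{\partial_{x^I}}\partial_{x^J}=\partial_{x^L}\widetilde\Gamma_{IJ}^L$ with $\widetilde\Gamma_{IJ}^L=g^{KL}\ast\widetilde\Gamma_{IJL}$ and $\widetilde\Gamma_{IJL}=(-1)^{|\partial_{x^L}|(|\partial_{x^I}|+|\partial_{x^J}|)}\Gamma_{IJL}$. The prefactor $(-1)^{|\partial_{x^L}|(|\partial_{x^I}|+|\partial_{x^J}|)}$ is symmetric under $I\leftrightarrow J$, so the identity just proved transports to $\widetilde\Gamma_{IJL}=(-1)^{|\partial_{x^I}||\partial_{x^J}|}\widetilde\Gamma_{JIL}$, whence $\widetilde\Gamma_{IJ}^L=(-1)^{|\partial_{x^I}||\partial_{x^J}|}\widetilde\Gamma_{JI}^L$ and $T^{\widetilde\nabla}(\partial_{x^I},\partial_{x^J})=\widetilde\nabla_{\partial_{x^I}}\partial_{x^J}-(-1)^{|\partial_{x^I}||\partial_{x^J}|}\widetilde\nabla_{\partial_{x^J}}\partial_{x^I}=0$; this extends to arbitrary $\widetilde Z\in\widetilde{T}X$ using the Leibniz rule of Lemma \ref{lem1}.

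I do not expect a genuine obstacle: the whole argument is bookkeeping of $\mathbb{Z}_2$-signs, and the symmetry hypothesis on $g$ is used exactly once, to symmetrize the third Koszul term. The one delicate point is fixing the correct sign convention in the super-torsion, namely the $(-1)^{|X||Y|}$ weighting of the transposed term: it is precisely with that convention that the Koszul expression is graded-symmetric in its first two slots, and a different convention would make the statement false. The only other thing that should be made explicit is the tensoriality of $T^\nabla$ and $T^{\widetilde\nabla}$, which is immediate from Lemma \ref{lem1} and lets one restrict the verification to the coordinate basis.
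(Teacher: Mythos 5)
Your proposal is correct and follows essentially the same route as the paper: both reduce the torsion on the coordinate basis (where the bracket vanishes) to the graded symmetry $\Gamma_{IJ}^L=(-1)^{|\partial_{x^I}||\partial_{x^J}|}\Gamma_{JI}^L$ of the Koszul expression, with the hypothesis on $g$ entering only through the third term. The only (harmless) difference is that you verify the symmetry on the lower-index $\Gamma_{IJK}$ before contracting with $g^{KL}$, and you make the tensoriality remark explicit, which the paper omits.
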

\begin{proof}
By \cite{BG}, we have
\begin{align}\label{a10}
T^\nabla(\partial_{x^I},\partial_{x^J})&=\nabla_{\partial_{x^I}}{\partial_{x^J}}-(-1)^{|\partial_{x^J}||\partial_{x^I}|}\nabla_{\partial_{x^J}}{\partial_{x^I}}\nonumber\\
&=\Gamma_{IJ}^L\partial_{x^L}-(-1)^{|\partial_{x^J}||\partial_{x^I}|}\Gamma_{JI}^L\partial_{x^L}\nonumber\\
&=(\Gamma_{IJ}^L-(-1)^{|\partial_{x^J}||\partial_{x^I}|}\Gamma_{JI}^L)\partial_{x^L}.\nonumber\\
\end{align}
By (\ref{a7000}), we have
\begin{align}\label{a11}
&\Gamma_{IJ}^L-(-1)^{|\partial_{x^J}||\partial_{x^I}|}\Gamma_{JI}^L\nonumber\\
&=\frac{1}{2}\left(\frac{\partial {g_{JK}}}{\partial_{x^I}}+(-1)^{|\partial_{x^I}||\partial_{x^J}|}\frac{\partial {g_{IK}}}{\partial_{x^J}}-(-1)^{|\partial_{x^K}|(|\partial_{x^I}|+|\partial_{x^J|)}}\frac{\partial {g_{IJ}}}{\partial_{x^K}}\right)\ast g^{KL}\nonumber\\
&-(-1)^{|\partial_{x^J}||\partial_{x^I}|}\frac{1}{2}\left(\frac{\partial {g_{IK}}}{\partial_{x^J}}+(-1)^{|\partial_{x^I}||\partial_{x^J}|}\frac{\partial {g_{JK}}}{\partial_{x^I}}-(-1)^{|\partial_{x^K}|(|\partial_{x^I}|+|\partial_{x^J}|)}\frac{\partial {g_{JI}}}{\partial_{x^K}}\right)\ast g^{KL}\nonumber\\
&=(-1)^{|\partial_{x^K}|(|\partial_{x^I}|+|\partial_{x^J}|)}\left((-1)^{|\partial_{x^I}||\partial_{x^J}|}\frac{\partial {g_{JI}}}{\partial_{x^K}}-\frac{\partial {g_{IJ}}}{\partial_{x^K}}\right)\nonumber\\
&=0.\nonumber\\
\end{align}
Similarly, $T^{\widetilde{\nabla}}(\partial_{x^I},\partial_{x^J})=0.$ Therefore, we get Proposition \ref{prop3}.
\end{proof}
Next we consider curvatures and Bianchi identities in noncommutative super surfaces.

Let $[\nabla_{\partial_{x^I}},\nabla_{\partial_{x^J}}]:=\nabla_{\partial_{x^I}}\nabla_{\partial_{x^J}}-(-1)^{|\partial_{x^I}||\partial_{x^J}|}\nabla_{\partial_{x^J}}\nabla_{\partial_{x^I}}$ and $[\widetilde{\nabla}_{\partial_{x^I}},\widetilde{\nabla}_{\partial_{x^J}}]:=\widetilde{\nabla}_{\partial_{x^I}}\widetilde{\nabla}_{\partial_{x^J}}-(-1)^{|\partial_{x^I}||\partial_{x^J}|}\widetilde{\nabla}_{\partial_{x^J}}\widetilde{\nabla}_{\partial_{x^I}}.$ Straightforward calculations show that
for all $f\in \widetilde{\mathcal{A}}$,
\begin{align*}
&[\nabla_{\partial_{x^I}},\nabla_{\partial_{x^J}}](f\ast Z)=(-1)^{|f|(|\partial_{x^I}|+|\partial_{x^J}|)}f\ast [\nabla_{\partial_{x^I}},\nabla_{\partial_{x^J}}]Z,~~~Z\in TX,\nonumber\\
&[\widetilde{\nabla}_{\partial_{x^I}},\widetilde{\nabla}_{\partial_{x^J}}](\widetilde{Z}\ast f)=[\widetilde{\nabla}_{\partial_{x^I}},\widetilde{\nabla}_{\partial_{x^J}}]\widetilde{Z}\ast f,~~~\widetilde{Z}\in \widetilde{T}X.\nonumber\\
\end{align*}
Clearly the right-hand side of the first equation belongs to $TX$, while that of the second equation
belongs to $\widetilde{T}X$. We restate these important facts as a proposition.
\begin{prop}\label{prop4}
The following maps
\begin{align*}
[\nabla_{\partial_{x^I}},\nabla_{\partial_{x^J}}]:TX\rightarrow TX,~~~[\widetilde{\nabla}_{\partial_{x^I}},\widetilde{\nabla}_{\partial_{x^J}}]:\widetilde{T}X\rightarrow \widetilde{T}X.
\end{align*}
are super left and super right $\widetilde{\mathcal{A}}$-module homomorphisms, respectively
\end{prop}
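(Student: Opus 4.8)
The plan is to verify the two displayed $\widetilde{\mathcal{A}}$-linearity identities stated just above the proposition, since the proposition merely repackages them. It suffices to treat a single generator $Z=\partial_{x^K}$ (resp. $\widetilde{Z}=\partial_{x^K}$) and an arbitrary homogeneous $f\in\widetilde{\mathcal{A}}$; the general case follows by $R[[\overline h]]$-linearity and the fact that every element of $TX$ is a finite left combination $h^K\ast\partial_{x^K}$ (and dually for $\widetilde TX$), together with one more application of the same computation.

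First I would compute $\nabla_{\partial_{x^I}}\nabla_{\partial_{x^J}}(f\ast\partial_{x^K})$ by applying Lemma \ref{lem1} twice. Writing $\nabla_{\partial_{x^J}}(f\ast\partial_{x^K})=(\partial_{x^J}f)\ast\partial_{x^K}+(-1)^{|f||\partial_{x^J}|}f\ast\nabla_{\partial_{x^J}}\partial_{x^K}$ and then applying $\nabla_{\partial_{x^I}}$ again (using Lemma \ref{lem1} on each of the two summands, and the Leibniz rule \eqref{a2} for $\partial_{x^I}\partial_{x^J}f$), one gets a sum of terms: one ``pure second-derivative'' term $(\partial_{x^I}\partial_{x^J}f)\ast\partial_{x^K}$, two ``mixed'' terms involving $(\partial_{x^J}f)\ast\nabla_{\partial_{x^I}}\partial_{x^K}$ and $(\partial_{x^I}f)\ast\nabla_{\partial_{x^J}}\partial_{x^K}$ with appropriate Koszul signs, and the ``pure curvature'' term $(-1)^{|f|(|\partial_{x^I}|+|\partial_{x^J}|)}f\ast\nabla_{\partial_{x^I}}\nabla_{\partial_{x^J}}\partial_{x^K}$. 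I would then do the same for the swapped term $(-1)^{|\partial_{x^I}||\partial_{x^J}|}\nabla_{\partial_{x^J}}\nabla_{\partial_{x^I}}(f\ast\partial_{x^K})$, and subtract. The key point is that the pure second-derivative term cancels because $\partial_{x^I}\partial_{x^J}f-(-1)^{|\partial_{x^I}||\partial_{x^J}|}\partial_{x^J}\partial_{x^I}f=0$ (the $\partial_{x^I}$ are commuting/anticommuting derivations of $\widetilde{\mathcal{A}}$ according to parity), and the two mixed terms cancel against their swapped counterparts once the Koszul signs are matched — this sign bookkeeping is the only place any care is needed. What survives is exactly $(-1)^{|f|(|\partial_{x^I}|+|\partial_{x^J}|)}f\ast[\nabla_{\partial_{x^I}},\nabla_{\partial_{x^J}}]\partial_{x^K}$, which is the first displayed identity; in particular the right-hand side lies in $TX$, so $[\nabla_{\partial_{x^I}},\nabla_{\partial_{x^J}}]$ is a super left $\widetilde{\mathcal{A}}$-module homomorphism of degree $|\partial_{x^I}|+|\partial_{x^J}|$.

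The right-handed statement is proved by the mirror-image computation: expand $\widetilde{\nabla}_{\partial_{x^I}}\widetilde{\nabla}_{\partial_{x^J}}(\partial_{x^K}\ast f)$ using the second identity in Lemma \ref{lem1} twice, subtract the sign-twisted swap, and observe the analogous cancellations. Because $f$ now sits on the right and is pushed past $\widetilde{\nabla}_{\partial_{x^I}}\widetilde{\nabla}_{\partial_{x^J}}\partial_{x^K}$ (a right module element, so no sign is generated when the derivative acts through it), the surviving term is simply $[\widetilde{\nabla}_{\partial_{x^I}},\widetilde{\nabla}_{\partial_{x^J}}]\partial_{x^K}\ast f$ with trivial sign, giving the second displayed identity and hence that $[\widetilde{\nabla}_{\partial_{x^I}},\widetilde{\nabla}_{\partial_{x^J}}]$ is a super right $\widetilde{\mathcal{A}}$-module homomorphism. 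I expect no genuine obstacle here; the only thing to be disciplined about is that all Koszul signs be computed consistently with the conventions in \eqref{a2} and \eqref{a7000}, and that the associativity of $\ast$ on $\widetilde{\mathcal{A}}$ be used freely to regroup the triple products.
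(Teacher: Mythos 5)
Your proposal is correct and follows exactly the route the paper intends: the paper simply asserts the two $\widetilde{\mathcal{A}}$-linearity identities "by straightforward calculations," and your argument supplies precisely that calculation — two applications of Lemma \ref{lem1}, cancellation of the $\partial_{x^I}\partial_{x^J}f$ term by supercommutativity of the derivations, and cancellation of the mixed terms after matching Koszul signs. The only cosmetic remark is that the reduction to generators $Z=\partial_{x^K}$ is unnecessary, since Lemma \ref{lem1} already applies to arbitrary $Z\in TX$ and $\widetilde{Z}\in\widetilde{T}X$.
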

Write
\begin{align}\label{a12}
&[\nabla_{\partial_{x^I}},\nabla_{\partial_{x^J}}]\partial_{x^K}=R(\partial_{x^I},\partial_{x^J})\partial_{x^K}:=R^L_{IJK}\ast\partial_{x^L},\nonumber\\
&[\widetilde{\nabla}_{\partial_{x^I}},\widetilde{\nabla}_{\partial_{x^J}}]\partial_{x^K}=\widetilde{R}(\partial_{x^I},\partial_{x^J})\partial_{x^K}:=\partial_{x^L}\ast \widetilde{R}^L_{IJK},\nonumber\\
\end{align}
for some $R^L_{IJK},~\widetilde{R}^L_{IJK}\in \widetilde{\mathcal{A}}.$

\begin{defn}\label{def7}
We refer $R^L_{IJK},~\widetilde{R}^L_{IJK},$ respectively, as the super Riemann curvatures of the left
and right tangent bundles of the noncommutative super surface $X$.
\end{defn}
\begin{lem}\label{lem2}
The following equalities holds:
\begin{align*}
&R^L_{IJK}=\partial_{x^I}(\Gamma_{JK}^L)-(-1)^{|\partial_{x^I}||\partial_{x^J}|}\partial_{x^J}(\Gamma_{IK}^L)+(-1)^{|\partial_{x^I}|(|\partial_{x^J}|+|\partial_{x^K}|+|\partial_{x^M}|)}\Gamma^M_{JK}\ast \Gamma_{IM}^L\nonumber\\
&-(-1)^{|\partial_{x^J}|(|\partial_{x^K}|+|\partial_{x^M}|)}\Gamma^M_{IK}\ast\Gamma_{JM}^L,\\
&\widetilde{R}^L_{IJK}=\widetilde{\Gamma}^L_{IM}\ast\widetilde{\Gamma}_{JK}^M-(-1)^{|\partial_{x^I}||\partial_{x^J}|}\widetilde{\Gamma}^L_{JM}\ast\widetilde{\Gamma}_{IK}^M+(-1)^{|\partial_{x^I}||\partial_{x^L}|}\partial_{x^I}(\widetilde{\Gamma}_{JK}^L)\nonumber\\
&-(-1)^{|\partial_{x^J}|(|\partial_{x^I}|+|\partial_{x^L}|)}\partial_{x^J}(\widetilde{\Gamma}^L_{IK}).
\end{align*}
\end{lem}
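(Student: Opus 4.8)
\emph{Proof plan.} The plan is to compute the two commutators $[\nabla_{\partial_{x^I}},\nabla_{\partial_{x^J}}]\partial_{x^K}$ and $[\widetilde{\nabla}_{\partial_{x^I}},\widetilde{\nabla}_{\partial_{x^J}}]\partial_{x^K}$ directly from the definitions in \eqref{a7000}, expand them by applying the graded Leibniz rules twice, and then read off the coefficient of $\partial_{x^L}$ on each side; this last step is legitimate because $TX$ (resp. $\widetilde{T}X$) is free over $\widetilde{\mathcal{A}}$ on the $\partial_{x^L}$ and the commutators are module homomorphisms by Proposition \ref{prop4}. The one nontrivial preliminary is the degree count $|\Gamma_{JK}^M|=|\partial_{x^J}|+|\partial_{x^K}|+|\partial_{x^M}|$, and likewise $|\widetilde{\Gamma}_{JK}^M|=|\partial_{x^J}|+|\partial_{x^K}|+|\partial_{x^M}|$: this holds because in $\Gamma_{JK}^M=\Gamma_{JKN}\ast g^{NM}$ (and similarly for $\widetilde{\Gamma}$) each of the two factors carries a $|g|$, so $|g|$ drops out modulo $2$. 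This is the degree at which the Leibniz rule must be applied to the Christoffel symbols.

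For the left curvature I would first compute $\nabla_{\partial_{x^I}}\nabla_{\partial_{x^J}}\partial_{x^K}$ by substituting $\nabla_{\partial_{x^J}}\partial_{x^K}=\Gamma_{JK}^M\ast\partial_{x^M}$ and using $\nabla_{\partial_{x^I}}(f\ast\partial_{x^M})=(\partial_{x^I}f)\ast\partial_{x^M}+(-1)^{|f||\partial_{x^I}|}f\ast\nabla_{\partial_{x^I}}\partial_{x^M}$ with $f=\Gamma_{JK}^M$, then $\nabla_{\partial_{x^I}}\partial_{x^M}=\Gamma_{IM}^L\ast\partial_{x^L}$ and relabelling the dummy index in the first term. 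This yields
\[
\nabla_{\partial_{x^I}}\nabla_{\partial_{x^J}}\partial_{x^K}=(\partial_{x^I}\Gamma_{JK}^L)\ast\partial_{x^L}+(-1)^{|\partial_{x^I}|(|\partial_{x^J}|+|\partial_{x^K}|+|\partial_{x^M}|)}\Gamma_{JK}^M\ast\Gamma_{IM}^L\ast\partial_{x^L}.
\]
Writing the same expression with $I$ and $J$ interchanged, multiplying it by $(-1)^{|\partial_{x^I}||\partial_{x^J}|}$, and subtracting gives the claimed formula for $R^L_{IJK}$; the sign on the second quadratic term simplifies by $(-1)^{|\partial_{x^I}||\partial_{x^J}|}(-1)^{|\partial_{x^J}|(|\partial_{x^I}|+|\partial_{x^K}|+|\partial_{x^M}|)}=(-1)^{|\partial_{x^J}|(|\partial_{x^K}|+|\partial_{x^M}|)}$, and matching coefficients of $\partial_{x^L}$ uses freeness of $TX$.

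For the right curvature the computation is entirely parallel but uses the right-module Leibniz rule $\widetilde{\nabla}_{\partial_{x^I}}(\partial_{x^M}\ast f)=(-1)^{|\partial_{x^M}||\partial_{x^I}|}\partial_{x^M}\ast(\partial_{x^I}f)+(\widetilde{\nabla}_{\partial_{x^I}}\partial_{x^M})\ast f$, together with $\widetilde{\nabla}_{\partial_{x^J}}\partial_{x^K}=\partial_{x^M}\ast\widetilde{\Gamma}_{JK}^M$, $\widetilde{\nabla}_{\partial_{x^I}}\partial_{x^M}=\partial_{x^L}\ast\widetilde{\Gamma}_{IM}^L$, and associativity of $\ast$ to push the basis field $\partial_{x^L}$ to the far left. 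The extra sign $(-1)^{|\partial_{x^M}||\partial_{x^I}|}$ from commuting $\partial_{x^I}$ past $\partial_{x^M}$ in the right rule becomes, after relabelling $M\to L$, the factor $(-1)^{|\partial_{x^I}||\partial_{x^L}|}$ on the derivative term $\partial_{x^I}\widetilde{\Gamma}_{JK}^L$; antisymmetrizing in $I,J$ as before and using $(-1)^{|\partial_{x^I}||\partial_{x^J}|}(-1)^{|\partial_{x^J}||\partial_{x^L}|}=(-1)^{|\partial_{x^J}|(|\partial_{x^I}|+|\partial_{x^L}|)}$ produces the stated formula for $\widetilde{R}^L_{IJK}$ (with the quadratic terms written first).

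I expect the only real difficulty to be the $\mathbb{Z}_2$-sign bookkeeping: getting the Koszul signs right in the two nested applications of the Leibniz rule, making sure $\Gamma_{JK}^M$ (resp. $\widetilde{\Gamma}_{JK}^M$) enters at degree $|\partial_{x^J}|+|\partial_{x^K}|+|\partial_{x^M}|$ rather than $|\partial_{x^J}|+|\partial_{x^K}|$, and, in the right-hand case, tracking the position of the differentiated coefficient relative to $\partial_{x^L}$. No new structural idea is required beyond Lemma \ref{lem1}, Proposition \ref{prop4}, and associativity of the star product.
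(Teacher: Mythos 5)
Your computation is correct: expanding $\nabla_{\partial_{x^I}}\nabla_{\partial_{x^J}}\partial_{x^K}$ (resp.\ the tilde version) by two applications of the Leibniz rules of Lemma \ref{lem1}, using the degree count $|\Gamma_{JK}^M|=|\partial_{x^J}|+|\partial_{x^K}|+|\partial_{x^M}|$ and reading off coefficients by freeness, reproduces exactly the stated formulas, including all Koszul signs. The paper states this lemma without any proof, so your direct verification is precisely the routine argument the authors omit; there is no alternative approach to compare against.
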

\begin{prop}\label{prop5}
The super Riemann curvatures of the left
and right tangent bundles coincide in the sense that $R_{IJKL}=-(-1)^{(|\partial_{x^I}|+|\partial_{x^J}|)|\partial_{x^K}|}\widetilde{R}_{IJLK}.$
\end{prop}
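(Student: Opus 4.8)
The plan is to obtain the identity from metric compatibility (Proposition~\ref{prop2}) by the graded, two-sided analogue of the classical argument ``differentiate the metric twice and antisymmetrize in the form indices''. I work under the standing hypotheses that make Proposition~\ref{prop2} available, namely $|g|=0$ together with $g_{IJ}=(-1)^{|\partial_{x^I}||\partial_{x^J}|}g_{JI}$, and I recall that the coordinate derivations commute in the graded sense, $\partial_{x^I}\partial_{x^J}=(-1)^{|\partial_{x^I}||\partial_{x^J}|}\partial_{x^J}\partial_{x^I}$.

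First I would expand $\partial_{x^I}\partial_{x^J}\langle\partial_{x^K},\partial_{x^L}\rangle_g$ by applying (\ref{a8}) twice. One application with $Z=\partial_{x^K}$, $\widetilde Z=\partial_{x^L}$ produces $\langle\nabla_{\partial_{x^J}}\partial_{x^K},\partial_{x^L}\rangle_g+(-1)^{|\partial_{x^J}||\partial_{x^K}|}\langle\partial_{x^K},\widetilde\nabla_{\partial_{x^J}}\partial_{x^L}\rangle_g$; a second application of (\ref{a8}) to each of these two terms --- legitimate since $\nabla_{\partial_{x^J}}\partial_{x^K}\in TX$ is homogeneous of degree $|\partial_{x^J}|+|\partial_{x^K}|$ and $\widetilde\nabla_{\partial_{x^J}}\partial_{x^L}\in\widetilde TX$ --- yields four terms: $\langle\nabla_{\partial_{x^I}}\nabla_{\partial_{x^J}}\partial_{x^K},\partial_{x^L}\rangle_g$, two ``mixed'' terms of the form $\langle\nabla_{(\cdot)}\partial_{x^K},\widetilde\nabla_{(\cdot)}\partial_{x^L}\rangle_g$ carrying explicit sign factors, and $(-1)^{(|\partial_{x^I}|+|\partial_{x^J}|)|\partial_{x^K}|}\langle\partial_{x^K},\widetilde\nabla_{\partial_{x^I}}\widetilde\nabla_{\partial_{x^J}}\partial_{x^L}\rangle_g$.

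Next I would write the same expansion with $I$ and $J$ interchanged, multiply it by $(-1)^{|\partial_{x^I}||\partial_{x^J}|}$, and subtract it from the first expansion. The two left-hand sides agree by graded commutativity of $\partial_{x^I}$ and $\partial_{x^J}$, so the right-hand sides cancel. A short sign check --- using $(-1)^{2|\partial_{x^I}||\partial_{x^J}|}=1$ and $(-1)^{|\partial_{x^I}|(|\partial_{x^J}|+|\partial_{x^K}|)}=(-1)^{|\partial_{x^I}||\partial_{x^J}|+|\partial_{x^I}||\partial_{x^K}|}$ --- shows that each mixed term cancels against the $I\leftrightarrow J$ image of a mixed term, leaving precisely
\[
0=\langle[\nabla_{\partial_{x^I}},\nabla_{\partial_{x^J}}]\partial_{x^K},\partial_{x^L}\rangle_g+(-1)^{(|\partial_{x^I}|+|\partial_{x^J}|)|\partial_{x^K}|}\langle\partial_{x^K},[\widetilde\nabla_{\partial_{x^I}},\widetilde\nabla_{\partial_{x^J}}]\partial_{x^L}\rangle_g .
\]
Finally, plugging in (\ref{a12}) together with the $\widetilde{\mathcal A}$-bilinearity of $\langle-,-\rangle_g$ gives $\langle[\nabla_{\partial_{x^I}},\nabla_{\partial_{x^J}}]\partial_{x^K},\partial_{x^L}\rangle_g=R^M_{IJK}\ast g_{ML}=R_{IJKL}$ and $\langle\partial_{x^K},[\widetilde\nabla_{\partial_{x^I}},\widetilde\nabla_{\partial_{x^J}}]\partial_{x^L}\rangle_g=g_{KM}\ast\widetilde R^M_{IJL}=\widetilde R_{IJLK}$, the lowered curvatures being defined by contracting the free bundle index with $g$ on the appropriate side; this is exactly $R_{IJKL}=-(-1)^{(|\partial_{x^I}|+|\partial_{x^J}|)|\partial_{x^K}|}\widetilde R_{IJLK}$.

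The only delicate point is the sign bookkeeping in the double application of (\ref{a8}) and in the cancellation of the mixed terms; there is no conceptual obstacle, and --- just as in the classical case, where antisymmetry of $R_{ijkl}$ in its last two slots is a consequence of $\nabla g=0$ --- torsion-freeness (Proposition~\ref{prop3}) plays no role here, only metric compatibility. As a cross-check one could instead substitute the explicit formulas of Lemma~\ref{lem2} together with $\widetilde\Gamma_{IJL}=(-1)^{|\partial_{x^L}|(|\partial_{x^I}|+|\partial_{x^J}|)}\Gamma_{IJL}$ and the symmetry of $g$ and match the terms directly, but that route is more laborious and I would use it only to confirm the signs.
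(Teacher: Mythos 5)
Your proposal is correct and is essentially the paper's own argument: the authors likewise prove the identity by applying the metric compatibility of Proposition~\ref{prop2} twice and graded-antisymmetrizing in $I,J$, so that the first-order and mixed terms cancel and only $\langle[\nabla_{\partial_{x^I}},\nabla_{\partial_{x^J}}]\partial_{x^K},\partial_{x^L}\rangle_g$ and $\langle\partial_{x^K},[\widetilde\nabla_{\partial_{x^I}},\widetilde\nabla_{\partial_{x^J}}]\partial_{x^L}\rangle_g$ survive. The only cosmetic difference is that the paper starts from $R_{IJKL}$ and pushes derivatives outward, whereas you start from the vanishing graded commutator $[\partial_{x^I},\partial_{x^J}]$ acting on $\langle\partial_{x^K},\partial_{x^L}\rangle_g$; your observation that torsion-freeness is not needed also matches the paper.
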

\begin{proof}
Define $R_{IJKL}=R_{IJK}^M\ast g_{ML}$ and $\widetilde{R}_{IJKL}=g_{LM}\ast\widetilde{R}_{IJK}^M.$
Then,
\begin{align}\label{a14}
R_{IJKL}&=\left<(\nabla_{\partial_{x^I}}\nabla_{\partial_{x^J}}-(-1)^{|\partial_{x^I}||\partial_{x^J}|}\nabla_{\partial_{x^J}}\nabla_{\partial_{x^I}})\partial_{x^K},\partial_{x^I}\right>_g\nonumber\\
&=\left<\nabla_{\partial_{x^I}}\nabla_{\partial_{x^J}}\partial_{x^K},\partial_{x^L}\right>_g-(-1)^{|\partial_{x^I}||\partial_{x^J}|}\left<\nabla_{\partial_{x^J}}\nabla_{\partial_{x^I}}\partial_{x^K},\partial_{x^L}\right>_g\nonumber\\
&=\partial_{x^I}\left<\nabla_{\partial_{x^J}}\partial_{x^K},\partial_{x^L}\right>_g-(-1)^{|\partial_{x^I}|(|\partial_{x^J}|+|\partial_{x^K}|)}\left<\nabla_{\partial_{x^J}}\partial_{x^K},\widetilde{\nabla}_{\partial_{x^I}}\partial_{x^L}\right>_g-(-1)^{|\partial_{x^I}||\partial_{x^J}|}\nonumber\\
&\partial_{x^J}\left<\nabla_{\partial_{x^I}}\partial_{x^K},\partial_{x^L}\right>_g+(-1)^{|\partial_{x^I}||\partial_{x^J}|}(-1)^{|\partial_{x^J}|(|\partial_{x^I}|+|\partial_{x^K}|)}\left<\nabla_{\partial_{x^I}}\partial_{x^K},\widetilde{\nabla}_{\partial_{x^J}}\partial_{x^L}\right>_g.\nonumber\\
\end{align}
By Proposition \ref{prop2} and $$\left<\nabla_{\partial_{x^J}}\partial_{x^K},\widetilde{\nabla}_{\partial_{x^I}}\partial_{x^L}\right>_g=\partial_{x^J}\left<\partial_{x^K},\widetilde{\nabla}_{\partial_{x^I}}\partial_{x^L}\right>_g-(-1)^{|\partial_{x^K}||\partial_{x^J}|}\left<\partial_{x^K},\widetilde{\nabla}_{\partial_{x^J}}\widetilde{\nabla}_{\partial_{x^I}}\partial_{x^L}\right>_g,$$ then we have
\begin{align}\label{a15}
R_{IJKL}&=(-1)^{|\partial_{x^I}|(|\partial_{x^J}|+|\partial_{x^K}|)}(-1)^{|\partial_{x^J}||\partial_{x^K}|}\left<\partial_{x^K},\widetilde{\nabla}_{\partial_{x^J}}\widetilde{\nabla}_{\partial_{x^I}}\partial_{x^L}\right>_g\nonumber\\
&-(-1)^{|\partial_{x^K}|(|\partial_{x^I}|+|\partial_{x^J}|)}\left<\partial_{x^K},\widetilde{\nabla}_{\partial_{x^I}}\widetilde{\nabla}_{\partial_{x^J}}\partial_{x^L}\right>_g\nonumber\\
&=-(-1)^{(|\partial_{x^I}|+|\partial_{x^J}|)|\partial_{x^K}|}\widetilde{R}_{IJLK}.\nonumber\\
\end{align}
\end{proof}
Let \begin{align*}
R_{IJK;P}^L&=\partial_{x^P}(R_{IJK}^L)+(-1)^{|\partial_{x^P}|(|\partial_{x^I}|+|\partial_{x^J}|+|\partial_{x^K}|+|\partial_{x^S}|)}R_{IJK}^S\ast\Gamma_{PS}^L-\Gamma_{PI}^S\ast R_{SJK}^L\nonumber\\
&-(-1)^{|\partial_{x^I}|(|\partial_{x^S}|+|\partial_{x^J}|)}\Gamma_{PJ}^S\ast R_{ISK}^L-(-1)^{(|\partial_{x^I}|+|\partial_{x^J}|)(|\partial_{x^K}|+|\partial_{x^S}|)}\Gamma_{PK}^S\ast R_{IJS}^L.
\end{align*}
Then we get the following theorem
\begin{thm}\label{thm1}
When $g_{IJ}=(-1)^{|\partial_{x^I}||\partial_{x^J}|}g_{JI}$, the super Riemann curvatures of the left
and right tangent bundles of the noncommutative super surface $X$ satisfies the first Bianchi identity and the second Bianchi identity
\begin{align}\label{a16}
(-1)^{|\partial_{x^I}||\partial_{x^K}|}R_{IJK}^L+(-1)^{|\partial_{x^I}||\partial_{x^J}|}R_{JKI}^L+(-1)^{|\partial_{x^J}||\partial_{x^K}|}R_{KIJ}^L=0,
\end{align}
\begin{align}\label{a16ooo}
(-1)^{|\partial_{x^P}||\partial_{x^J}|}R_{IJK;P}^L+(-1)^{|\partial_{x^I}||\partial_{x^J}|}R_{PIK;J}^L+(-1)^{|\partial_{x^P}||\partial_{x^I}|}R_{JPK;I}^L=0.
\end{align}
\end{thm}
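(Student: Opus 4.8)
The plan is to deduce both identities from the torsion-freeness of the connection. By the hypothesis $g_{IJ}=(-1)^{|\partial_{x^I}||\partial_{x^J}|}g_{JI}$ and Proposition \ref{prop3} the torsion vanishes, and as the computation \eqref{a11} shows this is equivalent to the graded symmetry $\Gamma_{IJ}^L=(-1)^{|\partial_{x^I}||\partial_{x^J}|}\Gamma_{JI}^L$, i.e. $\nabla_{\partial_{x^I}}\partial_{x^J}=(-1)^{|\partial_{x^I}||\partial_{x^J}|}\nabla_{\partial_{x^J}}\partial_{x^I}$. Besides this I will use only the graded antisymmetry $R_{IJK}^L=-(-1)^{|\partial_{x^I}||\partial_{x^J}|}R_{JIK}^L$, immediate from \eqref{a12} and $[\nabla_{\partial_{x^I}},\nabla_{\partial_{x^J}}]=-(-1)^{|\partial_{x^I}||\partial_{x^J}|}[\nabla_{\partial_{x^J}},\nabla_{\partial_{x^I}}]$, and the explicit curvature expansion of Lemma \ref{lem2}.

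For the first identity \eqref{a16}, I would substitute the expression for $R_{IJK}^L$ from Lemma \ref{lem2} into the signed cyclic sum on the left of \eqref{a16}, producing twelve terms, and split them into the four ``derivative'' terms $\pm\,\partial_{x^\bullet}(\Gamma^L_{\bullet\bullet})$ and the six ``quadratic'' terms $\pm\,\Gamma^M_{\bullet\bullet}\ast\Gamma^L_{\bullet M}$. Using $\Gamma^L_{AB}=(-1)^{|A||B|}\Gamma^L_{BA}$ one then checks that the four derivative terms cancel in two pairs — e.g. the term $(-1)^{|\partial_{x^I}||\partial_{x^K}|}\partial_{x^I}(\Gamma^L_{JK})$ arising from $R^L_{IJK}$ is cancelled by the term $-(-1)^{|\partial_{x^J}||\partial_{x^K}|+|\partial_{x^I}||\partial_{x^K}|}\partial_{x^I}(\Gamma^L_{KJ})$ arising from $R^L_{KIJ}$ — and that the six quadratic terms cancel in three pairs once grouped according to the right-hand factor $\Gamma^L_{IM}$, $\Gamma^L_{JM}$ or $\Gamma^L_{KM}$; note that rewriting $\Gamma^M_{AB}=(-1)^{|A||B|}\Gamma^M_{BA}$ does not reorder the $\ast$-product, so no extra signs intervene. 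This part is pure bookkeeping.

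For the second identity \eqref{a16ooo}, I would pass to the operator picture. By \eqref{a12}, $R(\partial_{x^I},\partial_{x^J})=[\nabla_{\partial_{x^I}},\nabla_{\partial_{x^J}}]$ as a graded endomorphism of $TX$, and a direct computation with the graded Leibniz rule of Lemma \ref{lem1} shows that the quantity $R^L_{IJK;P}$ defined just before the theorem is the component form of the covariant derivative of the curvature $(1,3)$-tensor, namely $R^L_{IJK;P}\ast\partial_{x^L}=(\nabla_{\partial_{x^P}}R)(\partial_{x^I},\partial_{x^J})\partial_{x^K}$. Comparing $[\nabla_{\partial_{x^P}},R(\partial_{x^I},\partial_{x^J})]\partial_{x^K}$ with $(\nabla_{\partial_{x^P}}R)(\partial_{x^I},\partial_{x^J})\partial_{x^K}$ gives
$$[\nabla_{\partial_{x^P}},R(\partial_{x^I},\partial_{x^J})]\partial_{x^K}=(\nabla_{\partial_{x^P}}R)(\partial_{x^I},\partial_{x^J})\partial_{x^K}+R(\nabla_{\partial_{x^P}}\partial_{x^I},\partial_{x^J})\partial_{x^K}+(-1)^{|\partial_{x^P}||\partial_{x^I}|}R(\partial_{x^I},\nabla_{\partial_{x^P}}\partial_{x^J})\partial_{x^K}.$$
I would then apply the graded Jacobi identity to $\nabla_{\partial_{x^P}},\nabla_{\partial_{x^I}},\nabla_{\partial_{x^J}}$, written as
$$(-1)^{|\partial_{x^P}||\partial_{x^J}|}[\nabla_{\partial_{x^P}},R(\partial_{x^I},\partial_{x^J})]+(-1)^{|\partial_{x^P}||\partial_{x^I}|}[\nabla_{\partial_{x^I}},R(\partial_{x^J},\partial_{x^P})]+(-1)^{|\partial_{x^I}||\partial_{x^J}|}[\nabla_{\partial_{x^J}},R(\partial_{x^P},\partial_{x^I})]=0,$$
evaluate it on $\partial_{x^K}$, and substitute the previous display. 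The three ``$(\nabla R)$'' contributions assemble precisely into $\bigl((-1)^{|\partial_{x^P}||\partial_{x^J}|}R^L_{IJK;P}+(-1)^{|\partial_{x^I}||\partial_{x^J}|}R^L_{PIK;J}+(-1)^{|\partial_{x^P}||\partial_{x^I}|}R^L_{JPK;I}\bigr)\ast\partial_{x^L}$, while the six ``correction'' contributions $R(\nabla_{\partial_{x^\bullet}}\partial_{x^\bullet},\partial_{x^\bullet})\partial_{x^K}$ cancel in three pairs by means of $\nabla_{\partial_{x^A}}\partial_{x^B}=(-1)^{|A||B|}\nabla_{\partial_{x^B}}\partial_{x^A}$ and the antisymmetry of $R$ in its first two arguments — for example $(-1)^{|\partial_{x^P}||\partial_{x^J}|}R(\nabla_{\partial_{x^P}}\partial_{x^I},\partial_{x^J})\partial_{x^K}$ is cancelled by $(-1)^{|\partial_{x^P}||\partial_{x^I}|+|\partial_{x^I}||\partial_{x^J}|}R(\partial_{x^J},\nabla_{\partial_{x^I}}\partial_{x^P})\partial_{x^K}$. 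Since $TX$ is free over $\widetilde{\mathcal{A}}$ with basis $\{\partial_{x^L}\}$, deleting the common factor $\partial_{x^L}$ yields \eqref{a16ooo}.

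I expect the only genuine difficulty to be keeping the Koszul signs straight throughout; the one modelling decision that has to be made is the correct graded form of the Leibniz rule for $\nabla_{\partial_{x^P}}$ acting on the curvature $(1,3)$-tensor, and this is forced by the requirement that the resulting component formula agree with the expression for $R^L_{IJK;P}$ displayed before Theorem \ref{thm1}. No analytic or otherwise non-formal ingredient is involved.
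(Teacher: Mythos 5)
Your proposal is correct in substance, but it takes a genuinely different route from the paper on both halves, and the difference is worth recording. For the first identity the paper works at the operator level: it writes the signed cyclic sum as $\sum\pm[\nabla_{\partial_{x^\bullet}},\nabla_{\partial_{x^\bullet}}]\partial_{x^\bullet}$ and regroups the six double-covariant-derivative terms into three brackets of the form $\nabla_{\partial_{x^I}}\bigl(\nabla_{\partial_{x^J}}\partial_{x^K}-(-1)^{|\partial_{x^J}||\partial_{x^K}|}\nabla_{\partial_{x^K}}\partial_{x^J}\bigr)$, each killed by torsion-freeness; your component expansion via Lemma \ref{lem2} proves the same thing with more bookkeeping but no new input (note one slip: the cyclic sum has six derivative terms cancelling in three pairs and six quadratic terms cancelling in three pairs, not ``four derivative terms in two pairs''; I checked all six pairings and they do close up). The real divergence is in the second identity. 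The paper pairs with $\widetilde{\partial}^{x^L}:=\partial_{x^R}g^{RL}$, invokes metric compatibility to expand $\partial_{x^P}(R^L_{IJK})$, writes down three tautological identities and ends with ``by further computations'' — the actual mechanism is never exhibited. Your graded-Jacobi-identity argument supplies exactly that mechanism and is self-contained: the decomposition $[\nabla_{\partial_{x^P}},R(\partial_{x^I},\partial_{x^J})]\partial_{x^K}=R^L_{IJK;P}\ast\partial_{x^L}+\Gamma^S_{PI}\ast R^L_{SJK}\ast\partial_{x^L}+(-1)^{|\partial_{x^I}|(|\partial_{x^S}|+|\partial_{x^J}|)}\Gamma^S_{PJ}\ast R^L_{ISK}\ast\partial_{x^L}$ does follow from Lemma \ref{lem1} and Proposition \ref{prop4} with precisely the signs and star-orderings of the paper's definition of $R^L_{IJK;P}$ (the last correction term there is the $-(-1)^{|\partial_{x^P}|(|\partial_{x^I}|+|\partial_{x^J}|)}R(\partial_{x^I},\partial_{x^J})\nabla_{\partial_{x^P}}\partial_{x^K}$ piece of the graded commutator), and the six leftover terms cancel in three pairs using $\Gamma^S_{AB}=(-1)^{|\partial_{x^A}||\partial_{x^B}|}\Gamma^S_{BA}$ together with $R^L_{ABK}=-(-1)^{|\partial_{x^A}||\partial_{x^B}|}R^L_{BAK}$. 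One caveat: you present the matching of $R^L_{IJK;P}$ with the covariant derivative of the curvature tensor as a ``modelling decision forced by agreement''; since the paper fixes that formula beforehand, this step is an obligation to verify, not a convention to choose — it does check out, but it must be carried out explicitly because in the noncommutative setting the placement of the $\ast$-factors (left multiplication by $\Gamma^S_{P\bullet}$ in the lower-index corrections, right multiplication by $\Gamma^L_{PS}$ in the upper-index one) is not interchangeable.
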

\begin{proof}
Note
\begin{align}\label{a17}
Bi(\partial_{x^I},\partial_{x^J},\partial_{x^K}):=R_{IJK}^L+(-1)^{|\partial_{x^I}|(|\partial_{x^J}|+|\partial_{x^K}|)}R_{JKI}^L+(-1)^{(|\partial_{x^I}|+|\partial_{x^J}|)|\partial_{x^K}|}R_{KIJ}^L.\nonumber\\
\end{align}
By $g_{IJ}=(-1)^{|\partial_{x^I}||\partial_{x^J}|}g_{JI},$ then $T(\partial_{x^I},\partial_{x^J})=0,$ that is $\nabla_{\partial_{x^I}}\partial_{x^J}=(-1)^{|\partial_{x^I}||\partial_{x^J}|}\nabla_{\partial_{x^J}}\partial_{x^I}$. So using the definition of the Riemann curvature tensor, then
\begin{align}\label{a187}
&Bi(\partial_{x^I},\partial_{x^J},\partial_{x^K})\nonumber\\
&=[\nabla_{\partial_{x^I}},\nabla_{\partial_{x^J}}]\partial_{x^K}+(-1)^{|\partial_{x^I}|(|\partial_{x^J}|+|\partial_{x^K}|)}[\nabla_{\partial_{x^J}},\nabla_{\partial_{x^K}}]\partial_{x^I}+(-1)^{(|\partial_{x^I}|+|\partial_{x^J}|)|\partial_{x^K}|}[\nabla_{\partial_{x^K}},\nabla_{\partial_{x^I}}]\partial_{x^J}\nonumber\\
&=\nabla_{\partial_{x^I}}\nabla_{\partial_{x^J}}\partial_{x^K}-(-1)^{|\partial_{x^I}||\partial_{x^J}|}\nabla_{\partial_{x^J}}\nabla_{\partial_{x^I}}\partial_{x^K}+(-1)^{|\partial_{x^I}|(|\partial_{x^J}|+|\partial_{x^K}|)}[\nabla_{\partial_{x^J}}{\partial_{x^K}}\partial_{x^I}\nonumber\\
&-(-1)^{|\partial_{x^J}||\partial_{x^K}|}\nabla_{\partial_{x^K}}{\partial_{x^J}}\partial_{x^I}]+(-1)^{(|\partial_{x^I}|+|\partial_{x^J}|)|\partial_{x^K}|}[\nabla_{\partial_{x^K}}\nabla_{\partial_{x^I}}\partial_{x^J}-(-1)^{|\partial_{x^I}||\partial_{x^K}|}\nabla_{\partial_{x^I}}\nabla_{\partial_{x^K}}\partial_{x^J}]\nonumber\\
&=\nabla_{\partial_{x^I}}[\nabla_{\partial_{x^J}}\partial_K-(-1)^{|\partial_{x^J}||\partial_{x^K}|}\nabla_{\partial_{x^K}}\partial_{x^J}]-(-1)^{|\partial_{x^I}||\partial_{x^J}|}\nabla_{\partial_{x^J}}[\nabla_{\partial_{x^I}}\partial_{x^K}-(-1)^{|\partial_{x^I}||\partial_{x^K}|}\nabla_{\partial_{x^K}}\partial_{x^I}]\nonumber\\
&-(-1)^{(|\partial_{x^I}|+|\partial_{x^J}|)|\partial_{x^K}|}\nabla_{\partial_{x^K}}[\nabla_{\partial_{x^I}}\partial_{x^J}-(-1)^{|\partial_{x^I}||\partial_{x^J}|}\nabla_{\partial_{x^J}}\partial_{x^I}]\nonumber\\
&=0.\nonumber
\end{align}
Therefore, the first Bianchi identity holds. To prove the second Bianchi identity, let $\widetilde{\partial}^{x^L}:=\partial_{x^R}g^{RL},$ then by $[\nabla_{\partial_{x^I}},\nabla_{\partial_{x^J}}]\partial_{x^K}=R_{IJK}^L\partial_{x^L},$ we have $$R_{IJK}^L=\left<[\nabla_{\partial_{x^I}},\nabla_{\partial_{x^J}}]\partial_{x^K},\widetilde{\partial}^{x^L}\right>_g.$$
By Proposition \ref{prop2}, we have
\begin{align}
\partial_{x^P}(R_{IJK}^L)=&\partial_{x^P}\left(\left<[\nabla_{\partial_{x^I}},\nabla_{\partial_{x^J}}]\partial_{x^K},\widetilde{\partial}^{x^L}\right>_g\right)\nonumber\\
&=\left<\nabla_{\partial_{x^P}}([\nabla_{\partial_{x^I}},\nabla_{\partial_{x^J}}]\partial_{x^K}),\widetilde{\partial}^{x^L}\right>_g\nonumber\\
&+(-1)^{|\partial_{x^P}|(|\partial_{x^I}|+|\partial_{x^J}|+|\partial_{x^K}|)}\left<[\nabla_{\partial_{x^I}},\nabla_{\partial_{x^J}}]\partial_{x^K},\widetilde{\nabla}_{\partial_{x^P}}\widetilde{\partial}^{x^L}\right>_g.\nonumber\\
\end{align}
Obviously,
\begin{align}\label{a19}
&(-1)^{|\partial_{x^P}||\partial_{x^J}|}\bigg\{-\partial_{x^P}(R_{IJK}^L)+\left<\nabla_{\partial_{x^P}}([\nabla_{\partial_{x^I}},\nabla_{\partial_{x^J}}]\partial_{x^K}),\widetilde{\partial}^{x^L}\right>_g\nonumber\\
&+(-1)^{|\partial_{x^P}|(|\partial_{x^I}|+|\partial_{x^J}|+|\partial_{x^K}|)}\left<[\nabla_{\partial_{x^I}},\nabla_{\partial_{x^J}}]\partial_{x^K},\widetilde{\nabla}_{\partial_{x^P}}\widetilde{\partial}^{x^L}\right>_g\bigg\}=0.\nonumber\\
\end{align}
Replace $(P,I,J)$ with $(J,P,I),$ we get
\begin{align}\label{a20}
&(-1)^{|\partial_{x^I}||\partial_{x^J}|}\bigg\{-\partial_{x^J}(R_{PIK}^L)+\left<\nabla_{\partial_{x^J}}([\nabla_{\partial_{x^P}},\nabla_{\partial_{x^I}}]\partial_{x^K}),\widetilde{\partial}^{x^L}\right>_g\nonumber\\
&+(-1)^{|\partial_{x^J}|(|\partial_{x^P}|+|\partial_{x^I}|+|\partial_{x^K}|)}\left<[\nabla_{\partial_{x^P}},\nabla_{\partial_{x^I}}]\partial_{x^K},\widetilde{\nabla}_{\partial_{x^J}}\widetilde{\partial}^{x^L}\right>_g\bigg\}=0.\nonumber\\
\end{align}
Similarly, replace $(P,I,J)$ with $(I,J,P),$ we get
\begin{align}\label{a21}
&(-1)^{|\partial_{x^P}||\partial_{x^I}|}\bigg\{-\partial_{x^I}(R_{JPK}^L)+\left<\nabla_{\partial_{x^I}}([\nabla_{\partial_{x^J}},\nabla_{\partial_{x^P}}]\partial_{x^K}),\widetilde{\partial}^{x^L}\right>_g\nonumber\\
&+(-1)^{|\partial_{x^I}|(|\partial_{x^J}|+|\partial_{x^P}|+|\partial_{x^K}|)}\left<[\nabla_{\partial_{x^J}},\nabla_{\partial_{x^P}}]\partial_{x^K},\widetilde{\nabla}_{\partial_{x^I}}\widetilde{\partial}^{x^L}\right>_g\bigg\}=0.\nonumber\\
\end{align}
By adding the above three equations (\ref{a19}), (\ref{a20}) and (\ref{a21}) together, and by further computations. Then the second Bianchi identity can be proved.
\end{proof}
First by \cite{BG}, we get the the formula of the Ricci curvature tensor $Ric$ and scale curvature tensor $S$ in noncommutative super surface.
\begin{align}\label{b1}
Ric(\partial_{x^J},\partial_{x^K})=\sum_I(-1)^{|\partial_{x^I}|(|\partial_{x^I}|+|\partial_{x^J}|+|\partial_{x^K}|)}\frac{1}{2}[R_{IJK}^L+(-1)^{|\partial_{x^J}||\partial_{x^K}|}R_{IKJ}^L].\nonumber\\
\end{align}
\begin{align}\label{b2}
S:=\sum_{IJ}(-1)^{(|\partial_{x^I}|+1)|\partial_{x^J}|}R_{JI}\ast g^{IJ},\nonumber\\
\end{align}
where $R_{JI}$ denotes the Ricci curvature tensor $Ric(\partial_{x^J},\partial_{x^I})$.
\section{Examples}
\label{Sec:3}
In this section, we consider in some detail four concrete examples of noncommutative super surfaces.
\begin{ex}\label{ex1} Let $R^2_h(t_1,t_2)$ be double warped, and $h$ and $f$ are functions of $t_1$ and $t_2$ respectively, where $f(t_2)\neq 0,$ $h(t_1)\neq 0$, and $f_1(\overline{h})_{\overline{h}=0}=1,~f_2(\overline{h})_{\overline{h}=0}=1.$

 Then we define the metric $g$ on $R^2_h(t_1,t_2)$
\begin{align*}
g_{11}=f_1(\overline{h})f(t_2),~~~g_{12}=g_{21}=0,~~~g_{22}=f_2(\overline{h})h(t_1).
\end{align*}
Then the inverse metric is given by
\begin{align*}
g^{11}=\frac{1}{f_1(\overline{h})f(t_2)},~~~g^{12}=g^{21}=0,~~~g^{22}=\frac{1}{f_2(\overline{h})h(t_1)}.
\end{align*}
The computations are quite lengthy, thus we only record the results here. For the Christoffel symbols, by (\ref{a7000}), we have
\begin{align*}
&\Gamma_{111}=0,~~~\Gamma_{112}=-\frac{1}{2}f_1(\overline{h})\frac{\partial{f(t_2)}}{\partial{t_2}},~~~\Gamma_{121}=\frac{1}{2}f_1(\overline{h})\frac{\partial{f(t_2)}}{\partial{t_2}};\nonumber\\
&\Gamma_{122}=\frac{1}{2}f_2(\overline{h})\frac{\partial{h(t_1)}}{\partial{t_1}},~~~\Gamma_{211}=\frac{1}{2}f_1(\overline{h})\frac{\partial{f(t_2)}}{\partial{t_2}},~~~\Gamma_{212}=\frac{1}{2}f_2(\overline{h})\frac{\partial{h(t_1)}}{\partial{t_1}};\nonumber\\
&\Gamma_{221}=-\frac{1}{2}f_2(\overline{h})\frac{\partial{h(t_1)}}{\partial{t_1}},~~~\Gamma_{222}=0.\nonumber\\
\end{align*}
By (\ref{a7000}), for $R_h^2,$ the following equalities holds
\begin{align*}
&\Gamma_{11}^1=0,~~~\Gamma_{11}^2=-\frac{1}{2}\frac{f_1(\overline{h})}{f_2(\overline{h})}f'(t_2)\ast\frac{1}{h(t_1)},~~~\Gamma_{12}^1=\frac{1}{2}\frac{f'(t_2)}{f(t_2)};\nonumber\\
&\Gamma_{12}^2=\frac{1}{2}\frac{h'(t_1)}{h(t_1)},~~~\Gamma_{21}^1=\frac{1}{2}\frac{f'(t_2)}{f(t_2)},~~~\Gamma_{21}^2=\frac{1}{2}\frac{h'(t_1)}{h(t_1)};\nonumber\\
&\Gamma_{22}^1=-\frac{1}{2}\frac{f_2(\overline{h})}{f_1(\overline{h})}h'(t_1)\ast\frac{1}{f(t_2)},~~~\Gamma_{22}^2=0.
\end{align*}
We now find the super curvature
tensors with respect to $\overline{h}$,
\begin{align*}
&R_{121}^1=\frac{1}{4}\frac{h'(t_1)}{h(t_1)}\ast \frac{f'(t_2)}{f(t_2)}-\frac{1}{4}f'(t_2)\ast\frac{h'(t_1)}{h(t_1)}\ast\frac{1}{f(t_2)};\\
&R_{122}^1=\frac{-\frac{1}{2}f''(t_2)f(t_2)+\frac{1}{4}f'(t_2)^2}{f(t_2)^2}+\frac{1}{2}\frac{f_2(\overline{h})}{f_1(\overline{h})}\left[\left(\frac{1}{2}\frac{h'(t_1)^2}{h(t_1)}-h''(t_1)\right)\ast\frac{1}{f(t_2)}\right];\\
&R_{121}^2=\frac{1}{2}\frac{h''(t_1)h(t_1)-\frac{1}{2}h'(t_1)^2}{h(t_1)^2}+\frac{1}{2}\frac{f_1(\overline{h})}{f_2(\overline{h})}\left[\frac{2f''(t_2)f(t_2)-f'(t_2)^2}{2f(t_2)}\ast\frac{1}{h(t_1)}\right];\\
&R_{122}^2=\frac{1}{4}h'(t_1)\ast \frac{f'(t_2)}{f(t_2)}\ast\frac{1}{h(t_1)}-\frac{1}{4}\frac{f'(t_2)}{f(t_2)}\ast\frac{h'(t_1)}{h(t_1)}.\\
\end{align*}
By (\ref{b1}), we can also compute the Ricci curvature tensor,
\begin{align*}
&R_{11}=-\frac{1}{2}\frac{h''(t_1)h(t_1)-\frac{1}{2}h'(t_1)^2}{h(t_1)^2}+\frac{1}{2}\frac{f_1(\overline{h})}{f_2(\overline{h})}\left[\frac{2f''(t_2)f(t_2-f'(t_2)^2)}{2f(t_2)}\ast\frac{1}{h(t_1)}\right];\\
&R_{12}=R_{21}=\frac{1}{2}\bigg[\frac{1}{4}\frac{h'(t_1)}{h(t_1)}\ast \frac{f'(t_2)}{f(t_2)}-\frac{1}{4}f'(t_2)\ast\frac{h'(t_1)}{h(t_1)}\ast\frac{1}{f(t_2)}-\frac{1}{4}h'(t_1)\ast \frac{f'(t_2)}{f(t_2)}\ast\frac{1}{h(t_1)};\\
&+\frac{1}{4}\frac{f'(t_2)}{f(t_2)}\ast\frac{h'(t_1)}{h(t_1)}\bigg];\\
&R_{22}=\frac{-\frac{1}{2}f''(t_2)f(t_2)+\frac{1}{4}f'(t_2)^2}{f(t_2)^2}+\frac{1}{2}\frac{f_2(\overline{h})}{f_1(\overline{h})}\left[\left(\frac{1}{2}\frac{h'(t_1)^2}{h(t_1)}-h''(t_1)\right)\ast\frac{1}{f(t_2)}\right].\\
\end{align*}
By (\ref{b2}), we get the scalar curvature
\begin{align*} S&=-\bigg\{\frac{1}{2}\frac{h''(t_1)h(t_1)-\frac{1}{2}h'(t_1)^2}{h(t_1)^2}+\frac{1}{2}\frac{f_1(\overline{h})}{f_2(\overline{h})}\bigg[\frac{2f''(t_2)f(t_2-f'(t_2)^2)}{2f(t_2)}\ast\frac{1}{h(t_1)}\bigg]\bigg\}\ast\frac{1}{f_1(\overline{h})f(t_2)}\\
&+\bigg\{\frac{1}{4}h'(t_1)\ast \frac{f'(t_2)}{f(t_2)}\ast\frac{1}{h(t_1)}-\frac{1}{4}\frac{f'(t_2)}{f(t_2)}\ast\frac{h'(t_1)}{h(t_1)}\bigg\}\ast\frac{1}{f_2(\overline{h})h(t_1)}.\\
\end{align*}
\end{ex}
\begin{ex}\label{ex2}
 Let $f(t_2)>0,~h(t_1)>0,$ and the metric $g$ satisfies the following equalities
\begin{align*}
g_{11}=f(t_2),~~~g_{12}=0,~~~g_{21}=1,~~~g_{22}=h(t_1).
\end{align*}
Then the inverse metric is given by
\begin{align*}
g^{11}=\frac{1}{f(t_2)},~~~g^{12}=0,~~~g^{21}=-\frac{1}{h(t_1)}\ast\frac{1}{f(t_2)},~~~g^{22}=\frac{1}{h(t_1)}.
\end{align*}
Then the computations can
be carried out in much the same way as in the case of Example \ref{ex1}, and by (\ref{a7000}) we mainly
get
\begin{align*}
&\Gamma_{111}=0,~~~\Gamma_{112}=-\frac{1}{2}\frac{\partial{f(t_2)}}{\partial{t_2}},~~~\Gamma_{121}=\frac{1}{2}\frac{\partial{f(t_2)}}{\partial{t_2}};\nonumber\\
&\Gamma_{122}=\frac{1}{2}\frac{\partial{h(t_1)}}{\partial{t_1}},~~~\Gamma_{211}=\frac{1}{2}\frac{\partial{f(t_2)}}{\partial{t_2}},~~~\Gamma_{212}=\frac{1}{2}\frac{\partial{h(t_1)}}{\partial{t_1}};\nonumber\\
&\Gamma_{221}=-\frac{1}{2}\frac{\partial{h(t_1)}}{\partial{t_1}},~~~\Gamma_{222}=0.
\end{align*}
By (\ref{a7000}), the following equalities holds
\begin{align*}
&\Gamma_{11}^1=\frac{1}{2}f'(t_2)\ast\frac{1}{h(t_1)}\ast\frac{1}{f(t_2)},~~~\Gamma_{11}^2=-\frac{1}{2}f'(t_2)\ast\frac{1}{h(t_1)},~~~\Gamma_{12}^2=\frac{1}{2}\frac{h'(t_1)}{h(t_1)};\nonumber\\
&\Gamma_{12}^1=-\frac{1}{2}\ast\frac{h'(t_1)}{h(t_1)}\ast\frac{1}{f(t_2)}+\frac{1}{2}\frac{f'(t_2)}{f(t_2)},~~~\Gamma_{21}^1=\frac{1}{2}\frac{f'(t_2)}{f(t_2)}-\frac{1}{2}\frac{h'(t_1)}{h(t_1)}\ast\frac{1}{f(t_2)};\nonumber\\
&\Gamma_{21}^2=\frac{1}{2}\frac{h'(t_1)}{h(t_1)},~~~\Gamma_{22}^1=-\frac{1}{2}h'(t_1)\ast\frac{1}{f(t_2)},~~~\Gamma_{22}^2=0.
\end{align*}
We now find the super curvature
tensors with respect to $\overline{h}$,
\begin{align*}
&R_{121}^1=\frac{h'(t_1)^2-2h(t_1)h''(t_1)}{4h(t_1)^2}\ast \frac{1}{f(t_2)}-\frac{f'(t_2)^2-2f''(t_2)f(t_2)}{4f(t_2)^2}\ast\frac{1}{h(t_1)}\ast\frac{1}{f(t_2)}\\
&+\frac{1}{4}f'(t_2)\ast\frac{1}{h(t_1)}\ast\frac{f'(t_2)}{f(t_2)^2}-\frac{1}{4}\frac{h'(t_1)}{h(t_1)}\ast\frac{f'(t_2)}{f(t_2)}\ast\frac{1}{h(t_1)}\ast\frac{1}{f(t_2)}+\frac{1}{4}\frac{h'(t_1)}{h(t_1)}\ast\frac{f'(t_2)}{f(t_2)}\\
&+\frac{1}{4}f'(t_2)\ast\frac{1}{h(t_1)}\ast\frac{1}{f(t_2)}\ast\frac{h'(t_1)}{h(t_1)}\ast\frac{1}{f(t_2)}-\frac{1}{4}f'(t_2)\ast\frac{h'(t_1)}{h(t_1)}\ast\frac{1}{f(t_2)};\\
&R_{122}^1=\frac{h'(t_1)^2-2h(t_1)h''(t_1)}{4h(t_1)}\ast \frac{1}{f(t_2)}-\frac{1}{4}\frac{h'(t_1)}{h(t_1)}\ast\frac{f'(t_2)}{f(t_2)^2}+\frac{f'(t_2)^2-2f(t_2)f''(t_2)}{4f(t_2)^2}\\
&-\frac{1}{4}h'(t_1)\ast\frac{f'(t_2)}{f(t_2)}\ast\frac{1}{h(t_1)}\ast\frac{1}{f(t_2)}-\frac{1}{4}\frac{h'(t_1)}{h(t_1)}\ast\frac{1}{f(t_2)}\ast\frac{h'(t_1)}{h(t_1)}\ast\frac{1}{f(t_2)}\\
&+\frac{1}{4}\ast\frac{f'(t_2)}{f(t_2)}\ast\frac{h'(t_1)}{h(t_1)}\ast\frac{1}{f(t_2)};\\
&R_{121}^2=\frac{2h(t_1)h''(t_1)-h'(t_1)^2}{4h(t_1)^2}+\frac{2f(t_2)f''(t_2)-f'(t_2)^2}{4f(t_2)}\ast\frac{1}{h(t_1)}+\frac{1}{4}\frac{h'(t_1)}{h(t_1)}\\
&\ast\frac{f'(t_2)}{f(t_2)}\ast\frac{1}{h(t_1)}-\frac{1}{4}f'(t_2)\ast\frac{1}{h(t_1)}\ast\frac{1}{f(t_2)}\ast\frac{h'(t_1)}{h(t_1)};\\
&R_{122}^2=\frac{1}{4}h'(t_1)\ast \frac{f'(t_2)}{f(t_2)}\ast\frac{1}{h(t_1)}-\frac{1}{4}\frac{h'(t_1)}{h(t_1)}\ast\frac{f'(t_2)}{f(t_2)}\ast\frac{1}{h(t_1)}+\frac{f'(t_2)^2}{f(t_2)}\ast\frac{1}{h(t_1)}\\
&-\frac{1}{4}\frac{h'(t_1)^2}{h(t_1)^2}.
\end{align*}
By (\ref{b1}), we can also compute the Ricci curvature tensor,
\begin{align*}
&R_{11}=-\frac{2h(t_1)h''(t_1)-h'(t_1)^2}{4h(t_1)^2}-\frac{2f(t_2)f''(t_2)-f'(t_2)^2}{4f(t_2)}\ast\frac{1}{h(t_1)}-\frac{1}{4}\frac{h'(t_1)}{h(t_1)}\\
&\ast\frac{f'(t_2)}{f(t_2)}\ast\frac{1}{h(t_1)}+\frac{1}{4}f'(t_2)\ast\frac{1}{h(t_1)}\ast\frac{1}{f(t_2)}\ast\frac{h'(t_1)}{h(t_1)};\\
&R_{12}=R_{21}=\frac{1}{2}\bigg[\frac{h'(t_1)^2-2h(t_1)h''(t_1)}{4h(t_1)^2}\ast \frac{1}{f(t_2)}-\frac{f'(t_2)^2-2f''(t_2)f(t_2)}{4f(t_2)^2}\ast\frac{1}{h(t_1)}\ast\frac{1}{f(t_2)}\\
&-\frac{1}{4}h'(t_1)\ast \frac{f'(t_2)}{f(t_2)}\ast\frac{1}{h(t_1)}+\frac{1}{4}\frac{h'(t_1)}{h(t_1)}\ast\frac{f'(t_2)}{f(t_2)}\ast\frac{1}{h(t_1)}-\frac{f'(t_2)^2}{f(t_2)}\ast\frac{1}{h(t_1)}+\frac{1}{4}\frac{h'(t_1)^2}{h(t_1)^2}\bigg];\\
&R_{22}=\frac{h'(t_1)^2-2h(t_1)h''(t_1)}{4h(t_1)}\ast \frac{1}{f(t_2)}-\frac{1}{4}\frac{h'(t_1)}{h(t_1)}\ast\frac{f'(t_2)}{f(t_2)^2}+\frac{f'(t_2)^2-2f(t_2)f''(t_2)}{4f(t_2)^2}\\
&-\frac{1}{4}h'(t_1)\ast\frac{f'(t_2)}{f(t_2)}\ast\frac{1}{h(t_1)}\ast\frac{1}{f(t_2)}-\frac{1}{4}\frac{h'(t_1)}{h(t_1)}\ast\frac{1}{f(t_2)}\ast\frac{h'(t_1)}{h(t_1)}\ast\frac{1}{f(t_2)}\\
&+\frac{1}{4}\ast\frac{f'(t_2)}{f(t_2)}\ast\frac{h'(t_1)}{h(t_1)}\ast\frac{1}{f(t_2)}.
\end{align*}
By (\ref{b2}), we get the scalar curvature
\begin{align*} S&=\bigg\{-\frac{2h(t_1)h''(t_1)-h'(t_1)^2}{4h(t_1)^2}-\frac{2f(t_2)f''(t_2)-f'(t_2)^2}{4f(t_2)}\ast\frac{1}{h(t_1)}-\frac{1}{4}\frac{h'(t_1)}{h(t_1)}\ast\frac{f'(t_2)}{f(t_2)}\ast\frac{1}{h(t_1)}\\
&+\frac{1}{4}f'(t_2)\ast\frac{1}{h(t_1)}\ast\frac{1}{f(t_2)}\ast\frac{h'(t_1)}{h(t_1)}\bigg\}\ast\frac{1}{f(t_2)}+\bigg\{\frac{1}{2}\bigg[\frac{h'(t_1)^2-2h(t_1)h''(t_1)}{4h(t_1)^2}\ast \frac{1}{f(t_2)}\\
&-\frac{f'(t_2)^2-2f''(t_2)f(t_2)}{4f(t_2)^2}\ast\frac{1}{h(t_1)}\ast\frac{1}{f(t_2)}-\frac{1}{4}h'(t_1)\ast \frac{f'(t_2)}{f(t_2)}\ast\frac{1}{h(t_1)}+\frac{1}{4}\frac{h'(t_1)}{h(t_1)}\ast\frac{f'(t_2)}{f(t_2)}\ast\frac{1}{h(t_1)}\\
&-\frac{f'(t_2)^2}{f(t_2)}\ast\frac{1}{h(t_1)}+\frac{1}{4}\frac{h'(t_1)^2}{h(t_1)^2}\bigg]\bigg\}\ast\bigg(-\frac{1}{h(t_1)}\ast\frac{1}{f(t_2)}\bigg)+\bigg\{\frac{h'(t_1)^2-2h(t_1)h''(t_1)}{4h(t_1)}\ast \frac{1}{f(t_2)}\\
&-\frac{1}{4}\frac{h'(t_1)}{h(t_1)}\ast\frac{f'(t_2)}{f(t_2)^2}+\frac{f'(t_2)^2-2f(t_2)f''(t_2)}{4f(t_2)^2}-\frac{1}{4}h'(t_1)\ast\frac{f'(t_2)}{f(t_2)}\ast\frac{1}{h(t_1)}\ast\frac{1}{f(t_2)}\\
&-\frac{1}{4}\frac{h'(t_1)}{h(t_1)}\ast\frac{1}{f(t_2)}\ast\frac{h'(t_1)}{h(t_1)}\ast\frac{1}{f(t_2)}+\frac{1}{4}\ast\frac{f'(t_2)}{f(t_2)}\ast\frac{h'(t_1)}{h(t_1)}\ast\frac{1}{f(t_2)}\bigg\}\ast\frac{1}{h(t_1)}.\\
\end{align*}
\end{ex}
\begin{ex}\label{ex3}Another simple example is described by $A=(\frac{\partial}{\partial t_1},\frac{\partial}{\partial t_2},\frac{\partial}{\partial \xi}),$ and let the metric $g$ is even and asymmetry, then $g(\frac{\partial}{\partial \xi},\frac{\partial}{\partial \xi})\neq0.$
And the metric $g$ satisfies
\begin{align*}
&g_{11}=\phi_1(\overline{h})f_1(t_1),~~~g_{12}=0,~~~g_{13}=\varepsilon\xi,~~~g_{21}=0;\\
&g_{22}=\phi_2(\overline{h})f_2(t_1),~~~g_{23}=0,~~~g_{31}=\varepsilon\xi,~~~g_{32}=0,~~~g_{33}=1,
\end{align*}
where $f_1(t_1)>0,~f_2(t_1)>0,$~~~$\phi_1(\overline{h})_{\overline{h}=0}=1,$ and $\phi_2(\overline{h})_{\overline{h}=0}=1.$

Then the inverse metric is given by
\begin{align*}
&g^{11}=\frac{1}{\phi_1(\overline{h})f_1(t_1)},~~~g^{12}=0,~~~g^{13}=-\varepsilon\xi\frac{1}{\phi_1(\overline{h})f_1(t_1)},~~~g^{21}=0;\\
&g^{22}=\frac{1}{\phi_2(\overline{h})f_2(t_1)},~~~g^{23}=0,~~~g^{31}=-\varepsilon\xi\frac{1}{\phi_1(\overline{h})f_1(t_1)},~~~g^{32}=0,~~~g^{33}=1.
\end{align*}
Then by (\ref{a7000}), we have
\begin{align*}
&\Gamma_{111}=\frac{1}{2}\phi_1(\overline{h})f_1'(t_1),~~~\Gamma_{122}=\frac{1}{2}\phi_2(\overline{h})f_2'(t_1),~~~\Gamma_{133}=\varepsilon;\nonumber\\
&\Gamma_{212}=\frac{1}{2}\phi_2(\overline{h})f_2'(t_1),~~~\Gamma_{221}=-\frac{1}{2}\phi_2(\overline{h})f_2'(t_1),~~~\Gamma_{313}=\varepsilon,~~~\Gamma_{331}=0,
\end{align*}
where $\Gamma_{IJK}=0$ for else $(I,J,K).$

By (\ref{a7000}), the following equalities holds
\begin{align*}
&\Gamma_{11}^1=\frac{1}{2}\frac{f_1'(t_1)}{f_1(t_1)},~~~\Gamma_{11}^2=0,~~~\Gamma_{11}^3=-\frac{1}{2}\varepsilon\xi\frac{f_1'(t_1)}{f_1(t_1)},~~~\Gamma_{12}^1=0,~~~\Gamma_{12}^2=\frac{1}{2}\frac{f_2'(t_1)}{f_2(t_1)};\nonumber\\
&\Gamma_{12}^3=0,~~~\Gamma_{13}^1=-\varepsilon^2\xi\frac{1}{\phi_1(\overline{h})}\frac{1}{f_1(t_1)},~~~\Gamma_{13}^2=0,~~~\Gamma_{13}^3=\varepsilon;\nonumber\\
&\Gamma_{21}^1=0,~~~\Gamma_{21}^2=\frac{1}{2}\frac{f_2'(t_1)}{f_2(t_1)},~~~\Gamma_{21}^3=0,~~~\Gamma_{22}^1=-\frac{1}{2}\frac{\phi_2(\overline{h})}{\phi_1(\overline{h})}\frac{f_2'(t_1)}{f_1(t_1)},~~~\Gamma_{22}^2=0;\nonumber\\
&\Gamma_{22}^3=\frac{1}{2}\varepsilon\xi\frac{\phi_2(\overline{h})}{\phi_1(\overline{h})}\frac{f_2'(t_1)}{f_1(t_1)},~~~\Gamma_{23}^1=\Gamma_{23}^2=\Gamma_{23}^3=0,~~~\Gamma_{31}^1=-\varepsilon^2\xi\frac{1}{\phi_1(\overline{h})}\frac{1}{f_1(t_1)},~~~\Gamma_{31}^2=0;\nonumber\\
&\Gamma_{31}^3=\varepsilon,~~~\Gamma_{32}^1=\Gamma_{32}^2=\Gamma_{32}^3=\Gamma_{33}^1=\Gamma_{33}^2=\Gamma_{33}^3=0.
\end{align*}
We now find nonzero components of the super curvature
tensors with respect to $\overline{h}$,
\begin{align*}
&R_{122}^1=-\frac{1}{4}\frac{\phi_2(\overline{h})}{\phi_1(\overline{h})}\frac{2f_2''(t_1)f_1(t_1)-f_2'(t_1)f_1'(t_1)}{f_1(t_1)^2}+\frac{1}{4}\frac{\phi_2(\overline{h})}{\phi_1(\overline{h})}\frac{f_2'(t_1)^2}{f_1(t_1)f_2(t_1)};\\
&R_{121}^2=\frac{2f_2''(t_1)f_2(t_1)-f_2'(t_1)^2}{4f_2(t_1)^2}-\frac{1}{4}\frac{f_1'(t_1)f_2'(t_1)}{f_1(t_1)f_2(t_1)},~~~R_{123}^2=-\frac{1}{2}\varepsilon^2\xi\frac{1)}{\phi_1(\overline{h})}\frac{f_2'(t_1)}{f_1(t_1)f_2(t_1)};\\
&R_{122}^3=\frac{1}{2}\varepsilon\xi\frac{\phi_2(\overline{h})}{\phi_1(\overline{h})}\frac{f_2''(t_1)}{f_1(t_1)}+\frac{1}{4}\varepsilon\xi\frac{\phi_2(\overline{h})}{\phi_1(\overline{h})}\frac{2\varepsilon f_2'(t_1)-f_2'(t_1)^2}{f_1(t_1)f_2(t_1)},~~~R_{131}^1=-2\varepsilon^2\xi\frac{1}{\phi_1(\overline{h})}\frac{f_1'(t_1)}{f_1(t_1)^2}\nonumber\\
&-\varepsilon^3\xi\frac{1}{\phi_1(\overline{h})}\frac{1}{f_1(t_1)},~~~R_{133}^1=\varepsilon^2\frac{1}{\phi_1(\overline{h})}\frac{1}{f_1(t_1)},~~~R_{133}^3=-\varepsilon^3\xi\frac{1}{\phi_1(\overline{h})}\frac{1}{f_1(t_1)},~~~R_{131}^3=\varepsilon^2;\nonumber\\
&R_{231}^2=-\frac{1}{2}\varepsilon^2\xi\frac{\phi_2(\overline{h})}{\phi_1(\overline{h})^2}\frac{f_2'(t_1)}{f_1(t_1)^2},~~~R_{232}^2=-\frac{1}{2}\varepsilon^2\xi\frac{1}{\phi_1(\overline{h})^2}\frac{f_2'(t_1)}{f_1(t_1)f_2(t_1)},~~~R_{331}^1=-2\varepsilon^2\frac{1)}{\phi_1(\overline{h})}\frac{1}{f_1(t_1)},
\end{align*}
where $R_{IJK}^L=0,$ for any else $(I,J,K,L)$.\\
By (\ref{b1}), we can also compute asymptotic expansions of the Ricci curvature tensor,
\begin{align*}
&R_{11}=\frac{f_2'(t_1)^2}{4f_2(t_1)^2-2f_2''(t_1)f_2(t_1)}+\frac{1}{4}\frac{f_1'(t_1)f_2'(t_1)}{f_1(t_1)f_2(t_1)}+\varepsilon^2,~~~R_{12}=0;\\
&R_{13}=\varepsilon^2\xi\frac{1}{\phi_1(\overline{h})}\bigg(-\frac{f_1'(t_1)}{f_1(t_1)^2}+\frac{1}{f_1(t_1)}\bigg),~~~R_{23}=R_{33=0};\\
&R_{22}=-\frac{1}{4}\frac{\phi_2(\overline{h})}{\phi_1(\overline{h})}\frac{2f_2''(t_1)f_1(t_1)-f_2'(t_1)f_1'(t_1)}{f_1(t_1)^2}+\frac{1}{4}\frac{\phi_2(\overline{h})}{\phi_1(\overline{h})}\frac{f_2'(t_1)^2}{f_1(t_1)f_2(t_1)}.
\end{align*}
By (\ref{b2}), we get the scalar curvature
\begin{align*} S&=\bigg[\frac{f_2'(t_1)^2}{4f_2(t_1)^2-2f_2''(t_1)f_2(t_1)}+\frac{1}{4}\frac{f_1'(t_1)f_2'(t_1)}{f_1(t_1)f_2(t_1)}+\varepsilon^2\bigg]\ast\frac{1}{\phi_1(\overline{h})}\frac{1}{f_1(t_1)}\\
&+\bigg[-\frac{1}{4}\frac{1}{\phi_1(\overline{h})}\frac{2f_2''(t_1)f_1(t_1)-f_2'(t_1)f_1'(t_1)}{f_1(t_1)^2}+\frac{1}{4}\frac{1}{\phi_1(\overline{h})}\frac{f_2'(t_1)^2}{f_1(t_1)f_2(t_1)}\bigg]\ast\frac{1}{f_2(t_1)}.\\
\end{align*}
\end{ex}
\begin{ex}\label{ex4}The last example is described by $A=(\frac{\partial}{\partial t_1},\frac{\partial}{\partial t_2},\frac{\partial}{\partial \xi}),$ and let the metric $g$ is even and asymmetry, then $g(\frac{\partial}{\partial \xi},\frac{\partial}{\partial \xi})\neq0.$
And the metric $g$ satisfies
\begin{align*}
&g_{11}=f(t_2),~~~g_{12}=0,~~~g_{13}=\varepsilon\xi,~~~g_{21}=0;\\
&g_{22}=h(t_1),~~~g_{23}=0,~~~g_{31}=0,~~~g_{32}=0,~~~g_{33}=1.
\end{align*}
Then the inverse metric is given by
\begin{align*}
&g^{11}=\frac{1}{f(t_2)},~~~g^{12}=0,~~~g^{13}=-\varepsilon\xi\frac{1}{f(t_2)},~~~g^{21}=0;\\
&g^{22}=\frac{1}{h(t_1)},~~~g^{23}=0,~~~g^{31}=0,~~~g^{32}=0,~~~g^{33}=1.
\end{align*}
Then by (\ref{a7000}), we have
\begin{align*}
&\Gamma_{112}=-\frac{1}{2}f'(t_2),~~~\Gamma_{121}=\frac{1}{2}f'(t_2),~~~\Gamma_{122}=\frac{1}{2}h'(t_1),~~~\Gamma_{133}=\varepsilon;\nonumber\\
&\Gamma_{211}=\frac{1}{2}f'(t_2),~~~\Gamma_{212}=\frac{1}{2}h'(t_1),~~~\Gamma_{221}=-\frac{1}{2}h'(t_1),~~~\Gamma_{313}=\frac{1}{2}\varepsilon,
\end{align*}
where $\Gamma_{IJK}=0$ for else $(I,J,K).$

By (\ref{a7000}), the following equalities holds
\begin{align*}
&\Gamma_{11}^1=0,~~~\Gamma_{11}^2=-\frac{1}{2}f'(t_2)\ast\frac{1}{h(t_1)},~~~\Gamma_{11}^3=0,~~~\Gamma_{12}^1=\frac{1}{2}\frac{f'(t_2)}{f(t_2)},~~~\Gamma_{12}^2=\frac{1}{2}\frac{h'(t_1)}{h(t_1)};\nonumber\\
&\Gamma_{12}^3=-\frac{1}{2}\varepsilon\xi\frac{f'(t_2)}{f(t_2)},~~~\Gamma_{13}^1=\Gamma_{13}^2=0,~~~\Gamma_{13}^3=\varepsilon,~~~\Gamma_{21}^1=\frac{1}{2}\frac{f'(t_2)}{f(t_2)},~~~\Gamma_{21}^2=\frac{1}{2}\frac{h'(t_1)}{h(t_1)};\nonumber\\
&\Gamma_{21}^3=-\frac{1}{2}\varepsilon\xi\frac{f'(t_2)}{f(t_2)},~~~\Gamma_{22}^1=-\frac{1}{2}h'(t_1)\ast\frac{1}{f(t_2)},~~~\Gamma_{22}^2=0,~~~\Gamma_{22}^3=\frac{1}{2}\varepsilon\xi h'(t_1)\ast\frac{1}{f(t_2)};\nonumber\\
&\Gamma_{23}^1=\Gamma_{23}^2=\Gamma_{23}^3=\Gamma_{31}^3=\Gamma_{32}^1=\Gamma_{32}^2=\Gamma_{32}^3=\Gamma_{33}^1=\Gamma_{33}^2=\Gamma_{33}^3=0.\nonumber\\
\end{align*}
We now find the super curvature
tensors with respect to $\overline{h}$,
\begin{align*}
&R_{121}^1=\frac{1}{4}\frac{h'(t_1)}{h(t_1)}\ast\frac{f'(t_2)}{f(t_2)}-\frac{1}{4}f'(t_2)\ast\frac{h'(t_1)}{h(t_1)}\ast\frac{1}{f(t_2)},~~~R_{122}^1=\frac{h'(t_1)^2-2h''(t_1)h(t_1)}{4h(t_1)}\ast\frac{1}{f(t_2)}\nonumber\\
&+\frac{f'(t_2)^2-2f''(t_2)f(t_2)}{4f(t_2)^2},~~~R_{121}^2=\frac{h''(t_1)h(t_1)-h'(t_1)^2}{4h(t_1)^2}+\frac{2f''(t_2)f(t_2)f'(t_2)^2}{4f(t_2)}\ast\frac{1}{h(t_1)};\nonumber\\
&R_{122}^2=\frac{1}{2}h'(t_1)\ast\frac{f'(t_2)}{f(t_2)}\ast\frac{1}{h(t_1)}-\frac{1}{4}\frac{f'(t_2)}{f(t_2)}\ast\frac{h'(t_1)}{h(t_1)},~~~R_{121}^3=-\frac{1}{4}\varepsilon\xi\frac{h'(t_1)}{h(t_1)}\ast\frac{f'(t_2)}{f(t_2)}-\frac{1}{2}\xi\frac{f'(t_2)}{f(t_2)}\nonumber\\
&+\frac{1}{4}\varepsilon\xi f'(t_2)\ast\frac{h'(t_1)}{h(t_1)}\ast\frac{1}{f(t_2)},~~~R_{122}^3=\frac{1}{2}\xi\bigg(\varepsilon h''(t_1)+h'(t_1)-\frac{1}{2}\varepsilon\frac{h'(t_1)^2}{h(t_1)}\bigg)\ast\frac{1}{f(t_2)}\nonumber\\
&+\frac{1}{4}\varepsilon\xi\frac{2f''(t_2)f(t_2)-f'(t_2)^2}{f(t_2)^2},~~~R_{131}^3=\frac{1}{2}\varepsilon^2,~~~R_{132}^3=\frac{1}{4}\varepsilon\frac{f'(t_2)}{f(t_2)},~~~R_{231}^3=\frac{1}{4}\varepsilon\frac{f'(t_2)}{f(t_2)};\nonumber\\
&R_{232}^3=-\frac{1}{4}\varepsilon h'(t_1)\ast\frac{1}{f(t_2)},
\end{align*}
where $R_{IJK}^L=0,$ for any else $(I,J,K,L)$.\\
By (\ref{b1}), we can also compute the Ricci curvature tensor,
\begin{align*}
&R_{11}=-\frac{h''(t_1)h(t_1)-h'(t_1)^2}{4h(t_1)^2}-\frac{2f''(t_2)f(t_2)f'(t_2)^2}{4f(t_2)}\ast\frac{1}{h(t_1)}+\frac{1}{2}\varepsilon^2,~~~R_{12}=\frac{1}{8}\frac{h'(t_1)}{h(t_1)}\ast\frac{f'(t_2)}{f(t_2)}\\
&-\frac{1}{8}f'(t_2)\ast\frac{h'(t_1)}{h(t_1)}\ast\frac{1}{f(t_2)}\frac{1}{4}h'(t_1)\ast\frac{f'(t_2)}{f(t_2)}\ast\frac{1}{h(t_1)}+\frac{1}{8}\frac{f'(t_2)}{f(t_2)}\ast\frac{h'(t_1)}{h(t_1)}+\frac{1}{4}\varepsilon\frac{f'(t_2)}{f(t_2)},~~~R_{13}=0;\\
&R_{22}=\frac{h'(t_1)^2-2h''(t_1)h(t_1)}{4h(t_1)}\ast\frac{1}{f(t_2)}+\frac{f'(t_2)^2-2f''(t_2)f(t_2)}{4f(t_2)^2}-\frac{1}{4}\varepsilon h'(t_1)\ast\frac{1}{f(t_2)};\\
&R_{23}=R_{33=0}.
\end{align*}
By (\ref{b2}), we get the scalar curvature
\begin{align*} S&=\bigg[-\frac{h''(t_1)h(t_1)-h'(t_1)^2}{4h(t_1)^2}-\frac{2f''(t_2)f(t_2)f'(t_2)^2}{4f(t_2)}\ast\frac{1}{h(t_1)}+\frac{1}{2}\varepsilon^2\bigg]\ast\frac{1}{\phi_1(\overline{h})}\frac{1}{f(t_2)}\\
&+\bigg[\frac{h'(t_1)^2-2h''(t_1)h(t_1)}{4h(t_1)}\ast\frac{1}{f(t_2)}+\frac{f'(t_2)^2-2f''(t_2)f(t_2)}{4f(t_2)^2}-\frac{1}{4}\varepsilon h'(t_1)\ast\frac{1}{f(t_2)}\bigg]\ast\frac{1}{h(t_1)}.\\
\end{align*}
\end{ex}

\section{Acknowledgements}

The first author was supported in part by  NSFC No.11771070.

\vskip 0.5 true cm

%-----------------------------------------------------------------------------
%-----------------------------------------------------------------------------

\bigskip

\noindent {\footnotesize {\it Yong Wang} \\
{School of Mathematics and Statistics, Northeast Normal University, Changchun 130024, China}\\
{Email: wangy581@nenu.edu.cn}

\noindent {\footnotesize {\it T. Wu} \\
{School of Mathematics and Statistics, Northeast Normal University, Changchun 130024, China}\\
{Email: wut977@nenu.edu.cn}

\end{document}